\newcommand{\NN}{{\mathbb{N}}}
\newcommand{\bG}{{\mathbf{G}}}
\newcommand{\cB}{{\mathcal B}}
\newcommand{\cE}{{\mathcal E}}
\newcommand{\cH}{{\mathcal H}}
\newcommand{\fS}{{\mathfrak{S}}}
\newcommand{\fA}{{\mathfrak{A}}}
\newcommand{\Irr}{\operatorname{Irr}}
\newcommand{\GL}{{\operatorname{GL}}}
\newcommand{\SL}{{\operatorname{SL}}}
\newcommand{\PGL}{{\operatorname{PGL}}}
\newcommand{\SU}{{\operatorname{SU}}}
\newcommand{\SC}{{\operatorname{sc}}}
\newcommand\oh[1]{{{\bf O}_{#1}}}
\let\la=\lambda
\newcommand{\tw}[1]{{}^{#1}\!}
\newtheorem{thm}{Theorem}[section]
\newtheorem{lem}[thm]{Lemma}
\newtheorem{cor}[thm]{Corollary}
\newtheorem{prop}[thm]{Proposition}
\newtheorem*{teoA}{Theorem A}
\newtheorem*{teoB}{Theorem B}
\newtheorem*{teoC}{Theorem C}
\theoremstyle{definition}
\newtheorem{nota}[thm]{Notation}
\theoremstyle{remark}
\begin{document}

\title{Even degree characters in principal blocks}

\author{Eugenio Giannelli}
\address{Trinity Hall, University of Cambridge, Trinity Lane, CB21TJ, UK}
\email{eg513@cam.ac.uk}

\author{Gunter Malle}
\address{FB Mathematik, TU Kaiserslautern, Postfach 3049,
  67653 Kaiserslautern, Germany}
\email{malle@mathematik.uni-kl.de}

\author{Carolina Vallejo Rodr\'iguez}
\address{ICMAT, Campus Cantoblanco UAM, C/ Nicol\' as Cabrera, 13-15,
  28049 Madrid, Spain}
\email{carolina.vallejo@icmat.es}

\thanks{The first author's research is funded by Trinity Hall, University of
Cambridge. The second author gratefully acknowledges financial support by
SFB TRR 195. The third author is partially supported by the Spanish Ministerio
de Educaci\'on y Ciencia Proyectos MTM2016-76196-P and the ICMAT Severo Ochoa
project SEV-2011-0087.}

\keywords{Principal blocks, even degree characters}

\subjclass[2010]{20C15, 20C20, 20C30, 20C33}

\begin{abstract}
We characterise finite groups such that for an odd prime $p$ all the
irreducible characters in its principal $p$-block have odd degree. We show
that this situation does not occur in non-abelian simple groups of order
divisible by~$p$ unless $p=7$ and the group is $M_{22}$. As a consequence we
deduce that if $p\neq 7$ or if $M_{22}$ is not a composition factor of a
group $G$, then the condition above is equivalent to $G/\oh{p'}(G)$ having
odd order.
\end{abstract}

\maketitle


\section{Introduction}

Let $G$ be a finite group, let $p$ be a prime dividing the order of $G$ and
let $B_0$ be the Brauer
principal ($p$-)block of $G$. Brauer's height zero conjecture asserts that $p$
does not divide the degrees of the irreducible ordinary characters belonging
to $B_0$ if, and only if, a Sylow $p$-subgroup $P$ of $G$ is abelian. Let $q$
be a prime different from $p$. It would be interesting to characterise when
all degrees of irreducible ordinary characters belonging to $B_0$ are coprime
with $q$. When $q=2$, G. Navarro predicted that all irreducible ordinary
characters in $B_0$ have odd degree if, and only if, $G/\oh{p'}(G)$ has odd
order. We confirm here that this claim holds whenever $p\neq 7$.
For $p=7$, the group $M_{22}$ is a counterexample, and the only counterexample
among finite simple groups.

\begin{teoA}
 Let $p$ be an odd prime, and let $B_0$ be the principal $p$-block of a group
 $G$ of order divisible by $p$. If $p\neq 7$ or $M_{22}$ is not a composition
 factor of $G$, then every irreducible character in $B_0$ has odd degree if,
 and only if, $G/\oh{p'}(G)$ is a group of odd order.
\end{teoA}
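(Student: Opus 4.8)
We outline how Theorem~A follows (granting our analysis of non-abelian simple groups). The first step is the reduction to $\oh{p'}(G)=1$: since $\oh{p'}(G)$ lies in the kernel of every irreducible character in $B_0(G)$, inflation identifies $\Irr(B_0(G))$ with $\Irr(B_0(G/\oh{p'}(G)))$, and as $\oh{p'}(G/\oh{p'}(G))=1$ while the composition factors of $G/\oh{p'}(G)$ are among those of $G$, we may assume $\oh{p'}(G)=1$. We then have to prove that $B_0(G)$ consists of odd-degree characters precisely when $|G|$ is odd. One direction is trivial, since $\chi(1)$ divides $|G|$ for every $\chi\in\Irr(G)$. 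For the other, assume $\oh{p'}(G)=1$ and $|G|$ is even; I will exhibit a character of even degree in $B_0(G)$, arguing by induction on $|G|$ (the hypothesis ``$p\neq 7$ or $M_{22}$ is not a composition factor of $G$'' passes to subgroups and quotients, so it is available throughout the induction).

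The workhorse will be the following standard fact from the Clifford theory of blocks: if $K\trianglelefteq G$ with $p\mid|K|$ and $\theta\in\Irr(B_0(K))$, then every $\chi\in\Irr(G)$ lying over $\theta$ belongs to $B_0(G)$ --- its block is the Brauer correspondent of $B_0(K)$, which is $B_0(G)$ --- and, by Clifford's theorem, $\theta(1)$ divides $\chi(1)$. Thus it is enough to find a normal subgroup $K$ of $G$, of order divisible by $p$, whose principal block contains a character of even degree. Let $N$ be a minimal normal subgroup of $G$. Since $\oh{p'}(G)=1$, $N$ is not a $p'$-group, so $N$ is either an elementary abelian $p$-group or a direct product $S_1\times\dots\times S_k$ of copies of a non-abelian simple group $S$ with $p\mid|S|$. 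In the latter case $S$ is a composition factor of $G$, so $(S,p)\neq(M_{22},7)$, and by our theorem on non-abelian simple groups $B_0(S)$ contains a character $\psi$ of even degree; then $\psi$ on $S_1$ together with the trivial character of $S_2\times\dots\times S_k$ defines a character of $N$ of even degree $\psi(1)$ lying in $B_0(N)$, and since $p\mid|N|$ we are done by the workhorse.

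Suppose now that $N$ is an elementary abelian $p$-group. Inflation embeds $\Irr(B_0(G/N))$ into $\Irr(B_0(G))$, so it suffices to find a character of even degree in $B_0(G/N)$. If $(G/N)/\oh{p'}(G/N)$ has even order, this follows from the inductive hypothesis applied to $G/N$. Otherwise $\oh{p'}(G/N)\neq 1$ (as $|G/N|$ is even, $|N|$ being odd); let $M\trianglelefteq G$ be its full preimage, so that $N<M$, $p\mid|M|$, and $M/N$ is a $p'$-group. If $M\neq G$, then $\oh{p'}(M)\le\oh{p'}(G)=1$ while $|M|$ is even (because $|G/M|=|(G/N)/\oh{p'}(G/N)|$ is odd), so the inductive hypothesis gives a character of even degree in $B_0(M)$, and the workhorse finishes the case with $K=M$.

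The case $M=G$ is the one I expect to be the genuine difficulty: here $G/N$ is a $p'$-group, so $P:=N=\oh{p}(G)$ is an elementary abelian normal Sylow $p$-subgroup of $G$, and the inductive tool is useless because $B_0(P)$, being the unique block of the $p$-group $P$, has only odd-degree characters. Instead one argues directly: $\oh{p'}(G)=1$ forces $C_G(P)=P$; every $p$-block of $G$ has defect group $P$ (it is trapped between $\oh{p}(G)=P$ and a Sylow $p$-subgroup); and since the blocks with defect group $P\trianglelefteq G$ are governed by those of $PC_G(P)=P$, of which there is only one, standard block theory (the first main theorem) shows that $G$ has a unique $p$-block, whence $\Irr(B_0(G))=\Irr(G)$. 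Finally $\oh{2}(G)\le\oh{p'}(G)=1$, so $G$ has no normal abelian Sylow $2$-subgroup, and the It\^o--Michler theorem yields an irreducible character of $G$ of even degree, which lies in $B_0(G)$. This closes the induction and proves the theorem.
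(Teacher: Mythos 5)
Your overall strategy is sound and reaches the same conclusion, but it takes a genuinely different route through the $p$-solvable part of the argument. The paper uses Theorem~B on a minimal normal subgroup of a suitable quotient to show that $G$ must be $p$-solvable, and then invokes Fong's theorem \cite[Thm.~10.20]{Nav98}, which gives $\Irr(B_0)=\Irr(G/\oh{p'}(G))$ outright, so that the Ito--Michler theorem finishes immediately. You instead run an induction on $|G|$ that descends through normal subgroups and only at the very bottom (normal, elementary abelian, self-centralising Sylow $p$-subgroup, hence a unique block) applies Ito--Michler. Your treatment of the non-abelian minimal normal subgroup via Theorem~B is the same as the paper's, and your endgame in the normal-Sylow case is correct (the extended first main theorem does give a unique block there; alternatively such a $G$ is visibly $p$-solvable and Fong's theorem applies). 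What your longer route costs is some bookkeeping: the inductive hypothesis must be formulated as the general statement (you apply it to $G/N$ and to $M$, which need not satisfy $\oh{p'}=1$), not in the reduced form you announce at the outset.

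One statement does need repair: your ``workhorse'' is false as written. It is not true that every $\chi\in\Irr(G)$ lying over $\theta\in\Irr(B_0(K))$ belongs to $B_0(G)$; covering does not pin down the block of $\chi$, and ``the Brauer correspondent of $B_0(K)$'' is not the relevant notion here. For instance, take $G=C_3\times\fA_5$, $K=C_3$, $p=3$ and $\theta=1_K$: the character $1_{C_3}\times\eta$ with $\eta$ of degree $3$ lies over $\theta$ but sits in a block of defect zero. What is true, and is all your argument actually uses, is that $B_0(G)$ covers $B_0(K)$ and that for every $\theta\in\Irr(B_0(K))$ \emph{some} $\chi\in\Irr(B_0(G))$ lies over $\theta$; this is exactly \cite[Thm.~9.4]{Nav98}, the result the paper cites, and together with $\theta(1)\mid\chi(1)$ it justifies your reduction to finding an even-degree character in $B_0(K)$. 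With the workhorse restated in this existential form, your proof goes through.
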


It is worth mentioning that G.~Navarro, G.~Robinson and P.H.~Tiep \cite{NRT17}
have recently characterised the groups for which $G/\oh {p'}(G)$ has odd order
in terms of the number of real-valued characters in the principal $p$-block
of $G$. Namely they have shown that the principal $p$-block $B_0$ contains only
one real-valued irreducible character if, and only if, $G/\oh{p'}(G)$ has odd
order.

\smallskip

The proof of Theorem A ultimately relies on the fact that for every finite
non-abelian simple group $G$ and every odd prime divisor $p$ of its order,
the principal $p$-block of $G$ contains an even degree character, except for
$p=7$ and $G=M_{22}$. 

\begin{teoB}
 Let $G$ be a non-abelian simple group of order divisible by an odd prime $p$.
 Let $B_0$ be the principal $p$-block of $G$. Then $B_0$ contains an
 irreducible character $\chi$ of even degree unless $G=M_{22}$ and $p=7$.
\end{teoB}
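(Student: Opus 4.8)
I would prove Theorem~B by running through the classification of finite simple groups. Note first that $G$ non-abelian simple forces $|G|$ even (Feit--Thompson) and $\oh{p'}(G)=1$, so the assertion is exactly the one predicted by Navarro in the introduction, and the task is to confirm it, turning up precisely $(M_{22},7)$. I would divide the argument into four parts: the alternating groups $\fA_n$; the sporadic simple groups together with the Tits group $\tw{2}F_4(2)'$; the simple groups of Lie type in their defining characteristic $p$; and the simple groups of Lie type in non-defining characteristic $p$. In each case it is enough to exhibit one $\chi\in\Irr(B_0)$ of even degree.

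\textbf{Alternating and sporadic groups.}
For $\fA_n$ with $p$ odd, $\fA_n$ has index $2$ (prime to $p$) in $\fS_n$, the principal block $b_0$ of $\fA_n$ is covered by the principal block $B_0$ of $\fS_n$, and every irreducible character of $b_0$ occurs as a constituent of $\mathrm{Res}^{\fS_n}_{\fA_n}\chi^\la$ for some $\chi^\la\in\Irr(B_0)$. By Nakayama's conjecture, $\Irr(B_0)$ consists of the $\chi^\la$ whose $p$-core equals that of $(n)$, i.e.\ the one-row partition $(r)$ with $r\equiv n\pmod p$ and $0\le r<p$; since $p\mid|\fA_n|$ forces $n\ge p$, the $p$-weight $w=(n-r)/p$ is at least $1$, so $B_0$ is non-trivial. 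I would then produce a partition $\la$ in this block with $\la\neq\la'$ and $f^\la$ even, so that $\mathrm{Res}^{\fS_n}_{\fA_n}\chi^\la$ is an irreducible character of $b_0$ of even degree: one deforms a near-hook or near-rectangular shape so as to carry the prescribed $p$-core and then uses the hook-length formula together with the known behaviour of the $2$-part of the degrees of $\fS_n$ (Macdonald, Olsson); finitely many small $n$ are checked by hand, and if only self-conjugate $\la$ are available one asks instead that $4\mid f^\la$. The sporadic groups and $\tw{2}F_4(2)'$ form a finite list, which I would handle by consulting the \textsc{Atlas} together with the known distribution of ordinary irreducible characters into $p$-blocks (equivalently, using the character-table library in \texttt{GAP}); this is the step that turns up the exception $G=M_{22}$, $p=7$ --- where $B_0$ has cyclic defect group $C_7$ and all of its ordinary characters have odd degree --- and that shows no other simple group is an exception.

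\textbf{Lie type in defining characteristic.}
Let $p$ be the defining characteristic of $G$; then $p$ is odd and $G$ is defined over $\mathbb{F}_q$ with $q$ an (odd) power of $p$. By Humphreys' description of the $p$-blocks of a group with a split $BN$-pair of characteristic $p$, the principal block is the unique $p$-block of non-zero defect, so $B_0$ contains every irreducible character of $p'$-degree; it therefore suffices to find one of even degree. For this I would use a semisimple character $\chi_s$ attached to a semisimple element $s$ of the dual group lying in a maximal torus $T^{\ast}$ for which $v_2(|T^{\ast}|)$ is as small as possible: its degree $|G^{\ast}:T^{\ast}|_{p'}=|G|_{p'}/|T^{\ast}|_{p'}$ is prime to $p$ and is even, since $|G|$ is even while $|T^{\ast}|$ can be taken $2$-small. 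The smallest cases are also immediate by inspection: e.g.\ $\operatorname{PSL}_2(q)$ has a character of degree $q+1$ in $B_0$, which is even because $q$ is odd.

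\textbf{Lie type in non-defining characteristic (the crux).}
Here $p\nmid q$ but $p\mid|G|$; put $d=\operatorname{ord}_p(q)$. By the classification of unipotent $p$-blocks (Brou\'e--Malle--Michel, Cabanes--Enguehard, Enguehard), under the usual hypotheses on $p$ (good for $\bG$, not dividing $|Z(\bG_{\mathrm{sc}})^{F}|$, $p\neq 3$ if a factor of type $\tw{3}D_4$ is present, and the like), the unipotent characters in $B_0$ are exactly those of the $d$-Harish-Chandra series above the trivial $d$-cuspidal pair --- that is, the constituents of $R_{\mathbf L}^{\mathbf G}(1)$ for $\mathbf L=C_{\mathbf G}(\mathbf S)$ with $\mathbf S$ a Sylow $\Phi_d$-torus --- parametrised by $\Irr(W_{\mathbf G}(\mathbf L))$, where $W_{\mathbf G}(\mathbf L)$ is a complex reflection group (cyclic, $G(d,1,a)$, $G(d,d,a)$, or one of the exceptional ones) and whose degrees are explicit polynomials in $q$. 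I would scan this list for an even value: when $q$ is even, a $q$-divisible unipotent degree always does the job (e.g.\ the Steinberg character of degree $q^{N}$ when $d=1$), and when $q$ is odd this reduces to a $2$-adic estimate on products of values $\Phi_e(q)$ and powers of $q$, divided by an integer. When the unipotent part of $B_0$ by itself does not supply an even degree --- as already for $\operatorname{PSL}_2(q)$ with $d=1$, where $B_0\cap\cE(G,1)=\{1_G,\mathrm{St}\}$ has only odd degrees for $q$ odd --- one passes instead to a character in a Lusztig series $\cE(G,s)$ for a non-trivial $p$-element $s$ of the dual group, for instance a degree-$(q+1)$ character of $\operatorname{PSL}_2(q)$. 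I expect essentially all of the difficulty of Theorem~B to lie here: making the analysis uniform over every type, every twisting, every rank and every $d$, and treating separately the exceptional groups and the small ``bad'' primes ($p=3$ for most types, $p=5$ for $E_8$, and primes dividing $|Z(\bG_{\mathrm{sc}})^{F}|$), where $B_0$ can be small and the parity margin is narrow --- one must verify that, unlike for $M_{22}$, no further exception occurs.
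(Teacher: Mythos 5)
Your overall architecture (CFSG, four cases, ATLAS check turning up $(M_{22},7)$) matches the paper, and your treatments of the alternating groups (Nakayama plus Macdonald/Olsson $2$-adic estimates on hook-partition-like shapes), of the sporadic groups, and of the defining-characteristic case are all essentially the paper's arguments. The genuine gap is in the case you yourself identify as the crux: groups of Lie type in non-defining characteristic. There you propose to classify the unipotent characters of $B_0$ via $d$-Harish-Chandra theory and scan their degree polynomials for an even value, type by type, rank by rank, $d$ by $d$, with separate treatment of bad primes and primes dividing $|Z(\bG_{\SC})^F|$ --- and you explicitly leave all of that undone (``I expect essentially all of the difficulty of Theorem~B to lie here''). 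As written this is a programme, not a proof, and it is a much harder programme than necessary: the unipotent part of $B_0$ genuinely can fail to contain an even degree character (as you note for $\mathrm{PSL}_2(q)$, $d=1$), so the case analysis cannot be avoided on that route, and the block-theoretic classification you invoke excludes exactly the bad/small primes where the margin is thinnest.

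The missing idea is that the ``fallback'' you mention in one sentence --- semisimple characters $\chi_t$ for non-trivial $\ell$-elements $t$ of the dual group --- is in fact the whole argument, made uniform by a single group-theoretic input. By \cite{CE94} the principal $\ell$-block of $G=\bG^F$ meets \emph{every} series $\cE(G,t)$ with $t$ a semisimple $\ell$-element of $G^*$, and contains the semisimple character $\chi_t$ of degree $|G^*:C_{G^*}(t)|_{p'}$; this degree is even as soon as $t$ does not centralise a Sylow $2$-subgroup of $G^*$, and $\chi_t$ descends to the simple group precisely when $t\in[G^*,G^*]$ (a constraint your sketch does not address). The theorem of Kondrat'ev--Mazurov \cite{KM03} says Sylow $2$-subgroups of $S$ are self-centralising except when $G$ is $\SL_n(q)$, $\SU_n(q)$, $E_6(q)_\SC$ or $\tw2E_6(q)_\SC$; so outside those four families \emph{any} non-trivial $\ell$-element works, with no case analysis on $d$, no degree polynomials, and no restriction on $\ell$ beyond $\ell\ne p$. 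The four exceptional families are then handled by writing down an explicit $\ell$-element (with $\ell\mid q\mp1$) whose eigenvalue multiplicities, or whose centralising Levi of type $A_5$, force an even index for its centraliser. Without this (or an equivalent uniform device), your non-defining-characteristic case does not close.
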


In order to show that Theorem B holds for alternating groups we prove a more
general result that we believe to be of independent interest, concerning the
principal block of symmetric (and alternating) groups.

\begin{teoC}
 Let $n\geq 5$, and let $p$ and $q$ be distinct primes with $q<p\leq n$.
 Let $B_0$ be the principal $p$-block of either $\fS_n$ or $\fA_n$. Then
 $B_0$ contains an irreducible character $\chi$ of degree divisible by $q$.
\end{teoC}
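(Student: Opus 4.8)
The plan is to translate the statement into the combinatorics of partitions and then exhibit, essentially by hand, a partition in $B_0$ whose associated irreducible character has degree divisible by $q$. Recall that $\Irr(\fS_n)=\{\chi^\lambda : \lambda\vdash n\}$, that $\chi^\lambda(1)=n!/\prod_{(i,j)}h_{ij}(\lambda)$ by the hook length formula, and that, by Nakayama's conjecture, $\chi^\lambda$ lies in $B_0$ if and only if the $p$-core of $\lambda$ equals the $p$-core of $(n)$, which is the one-row partition $(r)$, where $n=wp+r$ with $0\le r<p$; note $w\ge 1$ since $p\le n$. Thus it suffices to produce $\lambda\vdash n$ with $p$-core $(r)$ and $q\mid\chi^\lambda(1)$.

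The tool I would use is the classical description (due to Macdonald, see also work of Olsson) of when a symmetric group character degree is prime to $q$: writing $n=\sum_i a_iq^i$ in base $q$, one has $q\nmid\chi^\lambda(1)$ exactly when, for every level $i$, the total size of the partitions occurring at level $i$ of the $q$-core tower of $\lambda$ equals $a_i$; equivalently, via iterated $q$-quotients, $q\mid\chi^\lambda(1)$ as soon as the $q$-quotient of $\lambda$ (or of one of its iterates) is too large at some level. So the target is a partition lying in $B_0$ (a congruence condition modulo $p$ on its beta-numbers) whose $q$-core tower violates this digit condition. In practice I would not verify the tower condition directly but instead compute a few hook-length products, reading off the $q$-adic valuation from Kummer's and Lucas' theorems.

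For the construction I would work on the abacus with $p$ runners: the $p$-core $(r)$ has a standard bead configuration, and every $\lambda\in B_0$ arises by redistributing beads within their runners, i.e.\ by choosing a $p$-quotient of total size $w$; there are at least $p$ such choices even when $w=1$. I would first handle $q=2$ (the case needed for Theorem~B): if $p\mid n$ then the hook partitions $(n-k,1^k)$ with $0\le k\le p-1$ all lie in $B_0$ and have degree $\binom{n-1}{k}$, and since $p$ is prime with $q<p$ the base-$q$ expansion of $n-1$ cannot consist only of digits $q-1$, so a suitable small $k$ makes $q\mid\binom{n-1}{k}$ by Lucas' theorem; if $p\nmid n$ I would instead use two-row partitions $(n-m,m)$, which lie in $B_0$ precisely when $m\equiv 0$ or $m\equiv r+1\pmod p$ (a one-line abacus check), of degree $\binom{n}{m}-\binom{n}{m-1}$, choosing $m$ so that this is divisible by $q$, again via a carry count; the weight-one (Brauer tree) blocks and the smallest values of $n$ would be treated directly. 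The general odd $q$ is organized the same way, reducing larger weights and the case $r\neq 0$ to the block of $\fS_p$ (or $\fS_{p+r}$) by concentrating the weight-$w$ quotient on a single runner, or by adding rim $p$-hooks, so that the relevant binomial factors reappear.

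Finally, for $\fA_n$: every $\psi\in\Irr(B_0(\fA_n))$ is a constituent of the restriction of some $\chi^\lambda$ with $\chi^\lambda\in B_0(\fS_n)$, and if the $\lambda$ produced above is not self-conjugate then $\chi^\lambda$ restricts irreducibly, so $B_0(\fA_n)$ contains a character of degree $\chi^\lambda(1)$ and we are done; when $q$ is odd this also suffices for self-conjugate $\lambda$, and when $q=2$ and only a self-conjugate $\lambda$ is available one arranges $4\mid\chi^\lambda(1)$ or swaps to a non-self-conjugate partition in $B_0$ with the same divisibility. I expect the main obstacle to be precisely the interaction of the two constraints: ``lying in $B_0$'' is a condition modulo $p$ on the abacus while ``degree divisible by $q$'' is a base-$q$ carry condition, and for unfavourable residues of $n$ (modulo $p$ and modulo powers of $q$) the obvious candidate partitions have degree prime to $q$; hence the heart of the argument is a careful, case-divided choice of partition together with the attendant Kummer/Lucas computations — the exceptional pair $(M_{22},7)$ appearing in Theorem~B is a reminder that arguments of this type are genuinely delicate.
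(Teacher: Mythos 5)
Your framework is the right one and matches the paper's: Nakayama's conjecture to detect membership in $B_0$, and the Macdonald--Olsson $q$-core tower criterion (equivalently Kummer/Lucas for hook shapes) to detect $q$-divisibility of the degree. However, what you have written is a plan rather than a proof, and the central step --- actually exhibiting, for every $n$, $p$, $q$, a partition satisfying both constraints --- is missing, and your proposed constructions fail in identifiable cases. Most seriously, for $p\nmid n$ your two-row family $(n-m,m)$ can be empty apart from the trivial character: you correctly note that $(n-m,m)\in B_0$ forces $m\equiv 0$ or $m\equiv r+1\pmod p$, but when $r=p-1$ and $w=1$ (e.g.\ $n=5$, $p=3$, or $n=9$, $p=5$) the conditions become $m\equiv 0\pmod p$ together with $m\le n/2<p$, leaving only $m=0$. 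So there is simply no two-row partition available, and no choice of $m$ ``via a carry count'' can rescue this; one is forced to other shapes (the paper uses hooks and near-hooks such as $(a,1^{wp})$ and $(\alpha_kq^k-1,a+1,1^{\ast})$). A second, smaller error: in the case $p\mid n$ you restrict to hooks $(n-k,1^k)$ with $0\le k\le p-1$ and claim a suitable $k$ in that range gives $q\mid\binom{n-1}{k}$; this is false (take $q=2$, $p=3$, $n=24$: $\binom{23}{k}$ is odd for all $k\le 7$). The statement is salvageable because \emph{every} hook of $n$ lies in $B_0$ when $p\mid n$, but as written the step does not go through.

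Beyond these, two further ingredients are asserted without argument. The proposed reduction of higher weights and nonzero residues ``to the block of $\fS_p$ by concentrating the quotient on one runner or adding rim $p$-hooks, so that the relevant binomial factors reappear'' is not a proof: adding a rim $p$-hook multiplies the degree by a ratio of hook-length products that bears no clean relation to the original binomial coefficients, and $q$-adic valuations are not preserved under such moves in any way you have established. Finally, for $\fA_n$ with $q=2$ and $\la=\la'$ you say one ``arranges $4\mid\chi^\la(1)$ or swaps to a non-self-conjugate partition'' --- precisely the point where the paper needs a dedicated estimate (its Lemma~\ref{lem: k+1-alpha}, giving $\nu_2(\chi^\la(1))\ge 2$ from $|\cH^{2^k}(\la)|=0$ and $|\cH^{2^{k-1}}(\la)|\le 2$). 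In short: correct strategy, but the case-by-case construction and verification that constitute essentially all of the paper's Section~3 are absent, and the two explicit families you do propose are insufficient as stated.
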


This article is structured as follows. In Section 2 we prove that Theorem~A
holds assuming Theorem~B. The rest of the article is devoted to the proof of
Theorem~B on finite non-abelian simple groups. In Section~3 we prove Theorem~C,
which includes the alternating group case of Theorem~B. Finally, in Section~4
we show that Theorem~B holds for sporadic groups and for groups of Lie type,
and conclude using the Classification of Finite Simple Groups.

\smallskip

We follow the notation of \cite{Is76} for ordinary characters and the
notation of \cite{Nav98} for blocks. By a block, we shall mean a $p$-block.
Also, if $B$ is a block of $G$, we will denote by $\Irr(B)$ the set of
irreducible complex characters lying in the block $B$. In general, we will
denote by $B_0(H)$ the principal block of a group $H$.
Finally if $n$ is a natural number, then we denote by $\nu_p(n)$ the maximal
integer $k$ such that $p^k$ divides $n$.
\medskip

\noindent
{\bf Acknowledgement.}~~Part of this work was done while the third author was
visiting the Department of Mathematics at the TU Kaiserslautern. She would
like to thank everyone at the department for the warm hospitality. The authors
would also like to thank Gabriel Navarro for his insight. 

\medskip

\section{A reduction to finite simple groups}\label{sec 2}
In this section we assume that Theorem B holds and we prove Theorem A. In this
sense, we reduce Theorem A to a problem on finite simple groups.

\smallskip

We assume that the reader is familiar with the theory of blocks and normal
subgroups (see for example Chapter 9 of \cite{Nav98}).
Also, we recall that if $G_1$ and $G_2$ are finite groups, then
$\Irr(B_0(G_1\times G_2))=\Irr(B_0(G_1))\times \Irr(B_0(G_2))$ (this follows
directly from the definition of the principal block, see
\cite[Def.~3.1]{Nav98}).

\begin{proof}[Proof of Theorem A]
We remark that $\Irr(B_0)\subseteq \Irr(G/\oh{p'}(G))$, since $B_0$ covers the
principal block of $\oh{p'}(G)$ which only consists of the trivial character.
Also, whenever $N\unlhd G$, we have that $\Irr(B_0(G/N))\subseteq \Irr(B_0)$.

The ``if part" follows from the first remark. To prove the ``only if part" we
first show that the hypotheses imply the $p$-solvability of $G$. Otherwise,
since the hypotheses are inherited by quotients of $G$, we may assume that
there is a minimal normal subgroup $N$ of $G$ which is neither a $p$-group nor
a $p'$-group. Hence $N=\prod_{i=1}^t S^{x_i}$, where $S\unlhd N$ is a simple
non-abelian group of order divisible by $p$ and $x_i \in G$. By hypothesis,
either $p\neq 7$ or $S\neq M_{22}$. By Theorem B, let $\theta \in \Irr(B_0(S))$
have even degree. Then $\phi=\theta^{x_1}\times \cdots \times \theta^{x_t}$
has even degree and belongs to
$\Irr(B_0(N))=\prod_{i=1}^t \Irr(B_0(S))^{x_i}$. By \cite[Thm.~9.4]{Nav98},
some $\chi \in \Irr(B_0)$ lies over $\phi$. Since $\chi(1)$ is odd by
hypothesis, we get a contradiction.

As $G$ is $p$-solvable, we have that $\Irr(B_0)=\Irr(G/\oh{p'}(G))$, by
\cite[Thm.~10.20]{Nav98}. Hence, the hypothesis that all irreducible characters
in $B_0$ have odd degree implies that the group $G/\oh{p'}(G)$ has a normal
Sylow $2$-subgroup by the Ito--Michler theorem (see \cite{It51} and
\cite{Mi86}). This forces $G/\oh{p'}(G)$ to be a group of odd order, as
desired.
\end{proof}

Recall that our motivation is to characterise when a prime $q$ does not divide
the degrees of the irreducible characters in the principal block. The
Ito--Michler theorem characterises when a prime $q$ does not divide the
degrees of the irreducible characters of a group. A natural version of the
Ito--Michler theorem for principal blocks would be: If all the irreducible
characters of $B_0(G)$ have degree coprime to $q$, for some prime $q\neq p$,
then some Sylow $q$-subgroup $Q$ of $G$ is normalised by a Sylow $p$-subgroup
$P$ of $G$. In \cite{NW01} the authors prove this result under the assumption
that $G$ is a $\{p, q\}$-separable group. However, such a version does not
hold outside $\{p,q\}$-separable groups, as the authors also point out that
the separability condition of $G$ is necessary (as shown by $G=J_1$, $p=2$,
$q=5$). For $q=2$ and $p\neq 7$ we have the characterisation given by
Theorem~A.


\section{Alternating groups}
The aim of this section is to prove Theorem C, which in particular implies
Theorem B for alternating groups.

\smallskip

We start by recalling some facts in the representation theory of symmetric
groups. We refer the reader to \cite{James}, \cite{JK} or \cite{OlssonBook}
for a more detailed account.
A partition $\la=(\la_1,\la_2,\dots,\la_\ell)$ is a finite
non-increasing sequence of positive integers. We say that $\la$ is a
partition of $|\la|=\sum\la_i$, written $\la\vdash|\la|$.
The Young diagram of $\la$ is the set $[\la]=\{(i,j)\in\NN\times\NN\mid
1\leq i\leq\ell,1\leq j\leq\la_i\},$ where we orient $\NN\times\NN$ with
the $x$-axis pointing right and the $y$-axis pointing down. We denote by
$\la'$ the conjugate partition of $\la$, whose Young diagram is
obtained from the Young diagram of $\la$ by a reflection over the main
diagonal.

\smallskip

Given $(r,c)\in[\la]$, the corresponding hook $H_{(r,c)}(\la)$ is the
set defined by
$$H_{(r,c)}(\la)
  =\{(r,y)\in [\la]\mid y\geq c\}\cup\{(x,c)\in [\la]\mid x\geq r\}.$$
We set $h_{r,c}(\la)=|H_{(r,c)}(\la)|=1+(\la_r-c)+(\la'_c-r)$,
and we denote by $\cH(\la)$ the multiset of hook-lengths in $[\la]$.
Similarly we let $\cH_r(\la)$ be the multiset of hook-lengths in the
$r$th row of $[\la]$. For $e\in\NN$ we let
$\cH^e(\la)=\bigcup_{r=1}^{\ell}\cH_r^e(\la)$, where
$\cH_r^e(\la)=\{(r,c)\in [\la]\mid e \text{ divides }h_{r,c}(\la)\}$.
If $(r,c)\in\cH^e(\la)$, then we say that $H_{(r,c)}(\la)$ is an $e$-hook
of $\la$, so that $|\cH ^e (\la)|$ is the number of $e$-hooks of $\la$.
The $e$-core $C_e(\la)$ of $\la$ is the partition obtained from $\la$ by
successively removing all $e$-hooks.

\smallskip

Let $q$ be a prime number.
An important combinatorial object for our analysis is the $q$-core tower
$T^q(\la)$ of a partition $\la$ of $n$ (we refer the reader to
\cite[Chap.~II]{OlssonBook} for a comprehensive description of this object).
Every partition of a given natural number is uniquely determined by its
$q$-core tower. We write $T(\la)$ instead of $T^q(\la)$ when $q$ is clear
from the context. For $j\in\NN_0$ we denote by $T_j(\la)$ the $j$th layer of
$T(\la)$. As explained in full details in \cite{OlssonBook},
$T_j(\la)=(\mu_1,\ldots, \mu_{q^j})$ is a sequence of $q$-core partitions such
that
$$\sum_{j\geq 0}|T_j(\la)|q^j=n,\ \ \  \text{where}\ \ \ 
   |T_j(\la)|=|\mu_1|+\cdots +|\mu_{q^j}|.$$
Note that $|T_0(\la)|=|C_q(\la)|$. Also note that if
$n=\sum_{j=0}^k\alpha_jq^j$ is the $q$-adic expansion of $n$, then
$|T_s(\la)|=0$ whenever $s>k$.
It is useful to remark at this point that if $m$ is the maximal integer such
that $|T_m(\la)|\neq 0$, then we have that $|\cH^{q^m}(\la)|=|T_m(\la)|$.

\smallskip

Partitions of $n$ correspond canonically to the irreducible characters of
$\fS_n$. We denote by $\chi^\la$ the irreducible character naturally labelled
by $\la\vdash n$. We recall that $(\chi^\la)_{\fA_n}$ is irreducible if,
and only if, $\la\neq \la'$. Otherwise $(\chi^\la)_{\fA_n}=\phi+\phi^{g}$ for
some $\phi\in\Irr(\fA_n)$ and $g\in\fS_n\smallsetminus\fA_n$.
(See \cite[Thm.~2.5.7]{JK}.)

\smallskip

The following result was first proved by MacDonald \cite{Mac} and it is
crucial for our purposes.

\begin{thm}[MacDonald]   \label{theo: mac}
 Let $q$ be a prime and let $n$ be a natural number with $q$-adic expansion
 $n=\sum_{j=0}^k\alpha_jq^j$. Let $\la$ be a partition of $n$. Then
 $$\nu_q(\chi^\la(1))
   =\big(\sum_{j\geq 0}|T_j(\la)|-\sum_{j=0}^k\alpha_j\big)/(q-1).$$
\end{thm}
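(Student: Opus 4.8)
The plan is to start from the Frame--Robinson--Thrall hook length formula
$\chi^\la(1)=n!/\prod_{(r,c)\in[\la]}h_{r,c}(\la)$, which converts the computation of $\nu_q(\chi^\la(1))$ into a difference of $q$-adic valuations,
$$\nu_q(\chi^\la(1))=\nu_q(n!)-\sum_{(r,c)\in[\la]}\nu_q\big(h_{r,c}(\la)\big).$$
I would evaluate the two terms separately, arranging each so that it matches part of the right-hand side of the theorem, and recombine at the very end.

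For the first term, Legendre's formula gives $\nu_q(n!)=\big(n-\sum_{j=0}^k\alpha_j\big)/(q-1)$, which already produces the digit sum $\sum_j\alpha_j$ appearing in the statement. For the second term the idea is to exchange the roles of the hooks and the powers of $q$: since $\nu_q(h)$ equals the number of integers $m\geq1$ with $q^m\mid h$, summing over all boxes of $[\la]$ gives
$$\sum_{(r,c)\in[\la]}\nu_q\big(h_{r,c}(\la)\big)=\sum_{m\geq1}|\cH^{q^m}(\la)|,$$
so the whole problem reduces to counting, for each $m$, the hooks of length divisible by $q^m$.

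The heart of the argument — and the step I expect to be the main obstacle — is to evaluate $|\cH^{q^m}(\la)|$ in terms of the $q$-core tower. The target identity is
$$|\cH^{q^m}(\la)|=\sum_{j\geq m}|T_j(\la)|\,q^{\,j-m}\qquad(m\geq1).$$
Two routes seem natural. The first uses the classical fact that $|\cH^{q^m}(\la)|$ equals the number of $q^m$-hooks successively removed to reach the $q^m$-core, so $|\cH^{q^m}(\la)|=(n-|C_{q^m}(\la)|)/q^m$, combined with the structural property that the $q$-core tower of $C_{q^m}(\la)$ coincides with that of $\la$ in layers $0,\dots,m-1$ and is empty above, whence $|C_{q^m}(\la)|=\sum_{j=0}^{m-1}|T_j(\la)|q^j$. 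The second is a direct induction on $m$ via the $q$-quotient: hooks of $\la$ of length divisible by $q$ biject with the boxes of the $q$-quotient $(\la^{(0)},\dots,\la^{(q-1)})$, a hook of length $q h'$ corresponding to one of length $h'$, so that $q^m$-hooks of $\la$ correspond to $q^{m-1}$-hooks of the quotient; since the $j$th layer of $T(\la)$ for $j\geq1$ is assembled from the $(j-1)$st layers of the towers $T(\la^{(i)})$, the identity follows by induction. Either way, the combinatorics of the core tower is the delicate point; the remark recorded just before the theorem (that $|\cH^{q^m}(\la)|=|T_m(\la)|$ when $m$ is the top nonzero layer) is exactly the extreme case of this identity and serves as a useful sanity check.

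Finally I would assemble the pieces. Substituting the identity and interchanging the order of summation,
$$\sum_{m\geq1}|\cH^{q^m}(\la)|=\sum_{j\geq1}|T_j(\la)|\sum_{m=1}^{j}q^{\,j-m}=\sum_{j\geq1}|T_j(\la)|\,\frac{q^{\,j}-1}{q-1}.$$
Splitting the geometric sum and using $\sum_{j\geq0}|T_j(\la)|q^j=n$ together with the fact that the $j=0$ term contributes $|T_0(\la)|$, the numerator collapses and yields $\sum_{(r,c)\in[\la]}\nu_q(h_{r,c}(\la))=\big(n-\sum_{j\geq0}|T_j(\la)|\big)/(q-1)$. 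Subtracting this from Legendre's expression for $\nu_q(n!)$, the two copies of $n$ cancel, leaving $\big(\sum_{j\geq0}|T_j(\la)|-\sum_{j=0}^k\alpha_j\big)/(q-1)$, which is precisely the claimed formula.
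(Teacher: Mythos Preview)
The paper does not prove this theorem; it is quoted from MacDonald \cite{Mac} without proof, as an external input (``The following result was first proved by MacDonald \cite{Mac} and it is crucial for our purposes.''). There is therefore no proof in the paper to compare against.

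That said, your argument is a correct reconstruction of the standard proof. The hook length formula plus Legendre is the natural starting point, and the identity $|\cH^{q^m}(\la)|=\sum_{j\ge m}|T_j(\la)|\,q^{\,j-m}$ is indeed the crux. Both routes you propose to it are valid: the first uses $|\cH^{q^m}(\la)|=(n-|C_{q^m}(\la)|)/q^m$ together with the fact that the $q$-core tower of the $q^m$-core agrees with that of $\la$ in layers $0,\dots,m-1$ and vanishes above (this is exactly the content of Olsson's treatment of iterated cores and quotients); the second, via the hook bijection between $q$-divisible hooks of $\la$ and all hooks of the $q$-quotient, is the inductive version of the same fact. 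Your final summation and cancellation are correct as written. The only point I would flag as needing a precise reference rather than an assertion is the structural statement $|C_{q^m}(\la)|=\sum_{j=0}^{m-1}|T_j(\la)|q^j$; this is in \cite[Chap.~II]{OlssonBook}, and once granted the rest is arithmetic.
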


Useful consequences of this result are recorded in the following two lemmas.

\begin{lem}   \label{lem: last layer}
 Let $q, n$ and $\la$ be as in Theorem~\ref{theo: mac}. If $q$ does not divide
 $\chi^\la(1)$ then $|\cH^{q^k}(\la)|=\alpha_k$.
\end{lem}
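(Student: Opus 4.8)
The plan is to read the hypothesis through MacDonald's formula (Theorem~\ref{theo: mac}) and to combine it with the elementary fact that, among all ways of writing $n=\sum_{j\ge 0}a_jq^j$ with non-negative integer coefficients $a_j$, the $q$-adic expansion is the unique one minimising $\sum_{j\ge 0}a_j$.

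First I would note that $q\nmid\chi^\la(1)$ means $\nu_q(\chi^\la(1))=0$, so Theorem~\ref{theo: mac} gives the equality $\sum_{j\ge 0}|T_j(\la)|=\sum_{j=0}^k\alpha_j$. On the other hand, by the defining property of the $q$-core tower recorded above Theorem~\ref{theo: mac} we always have $\sum_{j\ge 0}|T_j(\la)|q^j=n=\sum_{j=0}^k\alpha_jq^j$, and, as noted there, $|T_j(\la)|=0$ for $j>k$. Now the combinatorial fact: if $(a_j)_{j\ge 0}$ is any sequence of non-negative integers with $\sum_j a_jq^j=n$, then $\sum_j a_j\ge\sum_{j=0}^k\alpha_j$, with equality only when $a_j=\alpha_j$ for every $j$. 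This follows from a carrying argument: whenever some $a_i\ge q$, replacing $a_i$ by $a_i-q$ and $a_{i+1}$ by $a_{i+1}+1$ leaves $\sum_j a_jq^j$ unchanged while decreasing $\sum_j a_j$ by $q-1\ge 1$; since $\sum_j a_j$ is a non-negative integer this process terminates, and it can terminate only when all coefficients are $<q$, i.e.\ at the $q$-adic expansion of $n$. Applying this to $a_j=|T_j(\la)|$, the equality coming from Theorem~\ref{theo: mac} forces $|T_j(\la)|=\alpha_j$ for all $j$; in particular $|T_k(\la)|=\alpha_k\ne 0$, so $k$ is the largest index with $|T_k(\la)|\ne 0$. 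By the remark recorded just before Theorem~\ref{theo: mac}, this yields $|\cH^{q^k}(\la)|=|T_k(\la)|=\alpha_k$, as desired.

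I do not foresee a genuine obstacle here: the lemma is essentially a corollary of Theorem~\ref{theo: mac} once one observes that the vanishing of $\nu_q(\chi^\la(1))$ pins down \emph{every} layer of the $q$-core tower to the size dictated by the corresponding $q$-adic digit of $n$. The only point needing a line of care is the minimality-and-uniqueness statement for $q$-adic expansions, which the carrying argument above settles; note that it is precisely the \emph{uniqueness} (not just the inequality) that is needed to conclude $|T_k(\la)|=\alpha_k$ rather than merely $|T_k(\la)|\le\alpha_k$.
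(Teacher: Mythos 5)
Your proof is correct and is exactly the intended derivation: the paper states this lemma without proof, but the surrounding remarks (that $\sum_j|T_j(\la)|q^j=n$ with $|T_s(\la)|=0$ for $s>k$, and that $|\cH^{q^m}(\la)|=|T_m(\la)|$ for the top nonzero layer $m$) are set up precisely so that it follows from Theorem~\ref{theo: mac} in the way you describe. Your carrying argument correctly supplies the one nontrivial ingredient, namely that the $q$-adic expansion is the unique digit-sum-minimising representation, which pins down $|T_k(\la)|=\alpha_k$.
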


\begin{lem}   \label{lem: k+1-alpha}
 Let $k\geq 2$ and let $n=2^{k+1}-2^\ell$, for some $\ell\leq k-2$. Let
 $\la\vdash n$ such that $|\cH^{2^k}(\la)|=0$ and $|\cH^{2^{k-1}}(\la)|\leq 2$.
 Then $\nu_2(\chi^\la(1))\geq 2$.
\end{lem}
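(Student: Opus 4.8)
The plan is to apply MacDonald's theorem (Theorem~\ref{theo: mac}) with $q=2$. First I would record the $2$-adic expansion of $n=2^{k+1}-2^\ell$: since $\ell\le k-2<k+1$, one has $2^\ell(2^{k+1-\ell}-1)=2^\ell+2^{\ell+1}+\cdots+2^k$, so $\alpha_j=1$ for $\ell\le j\le k$ and $\alpha_j=0$ otherwise, and in particular $\sum_{j=0}^k\alpha_j=k+1-\ell$ and the top index of the expansion is indeed $k$. MacDonald's formula (with $q-1=1$) then reads $\nu_2(\chi^\la(1))=\sum_{j\ge 0}|T_j(\la)|-(k+1-\ell)$, so the lemma is equivalent to the estimate $\sum_{j\ge 0}|T_j(\la)|\ge k+3-\ell$.

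Next I would use the two hypotheses to control the top two layers of the $2$-core tower. Let $m$ be the largest index with $|T_m(\la)|\neq 0$. By the remark preceding Theorem~\ref{theo: mac}, $|\cH^{2^m}(\la)|=|T_m(\la)|\neq 0$; as $|\cH^{2^k}(\la)|=0$, this forces $m\le k-1$, hence $|T_k(\la)|=0$ and $n=\sum_{j=0}^{k-1}|T_j(\la)|2^j$. If moreover $m=k-1$, the same remark gives $|T_{k-1}(\la)|=|\cH^{2^{k-1}}(\la)|\le 2$, while if $m\le k-2$ then $|T_{k-1}(\la)|=0$; thus in all cases $t:=|T_{k-1}(\la)|\in\{0,1,2\}$.

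Now write $N:=n-t\,2^{k-1}=\sum_{j=0}^{k-2}|T_j(\la)|2^j=(8-2t)2^{k-2}-2^\ell$. Since $\ell\le k-2$ we have $N=(7-2t)2^{k-2}+r$ with $r:=2^{k-2}-2^\ell$ satisfying $0\le r<2^{k-2}$. The elementary input I would use is: whenever a nonnegative integer $M$ is written as $\sum_{j=0}^{a}c_j2^j$ with $c_j\in\NN_0$, the number of summands satisfies $\sum_j c_j\ge\lfloor M/2^{a}\rfloor+s_2(M\bmod 2^a)$, where $s_2(x)$ denotes the number of $1$'s in the binary expansion of $x$. (This follows by an easy induction on $M$: if $M\ge 2^a$ then a representation minimising $\sum_j c_j$ must have $c_a\ge 1$, since otherwise some $c_j\ge 2$ with $j<a$ and one could carry, so removing one copy of $2^a$ reduces the claim to $M-2^a$; if $M<2^a$ it is the classical fact that the binary expansion is a shortest representation.) Applying this to $\sum_{j=0}^{k-2}|T_j(\la)|2^j=N$ and noting $s_2(r)=k-2-\ell$ gives $\sum_{j=0}^{k-2}|T_j(\la)|\ge(7-2t)+(k-2-\ell)$. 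Adding back the $t$ cells on the $(k-1)$st layer, $\sum_{j\ge 0}|T_j(\la)|\ge t+(7-2t)+(k-2-\ell)=(k+5-\ell)-t\ge k+3-\ell$ because $t\le 2$. Feeding this into MacDonald's formula yields $\nu_2(\chi^\la(1))\ge 2$, as desired.

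The argument is essentially bookkeeping around MacDonald's theorem, and the one point that genuinely needs care is the lower bound on $\sum_j|T_j(\la)|$: the crude estimate $\sum_j|T_j(\la)|\ge n/2^{k-2}-t\ge 7-2=5$ is insufficient once $k-\ell$ is large, so one really must refine it by the binary digit sum of the remainder $2^{k-2}-2^\ell$. I would also check that everything behaves uniformly in the extreme case $\ell=k-2$, where $r=0$ and $s_2(r)=0$.
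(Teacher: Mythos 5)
Your proof is correct and follows essentially the same route as the paper: deduce $|T_k(\la)|=0$ and $|T_{k-1}(\la)|\le 2$ from the hook hypotheses, bound $\sum_j|T_j(\la)|$ from below using $n=\sum_j|T_j(\la)|2^j$, and conclude via MacDonald's formula. The only difference is that you supply a careful justification (the digit-sum/carrying lemma) for the key inequality $\sum_j|T_j(\la)|\ge k+3-\ell$, which the paper asserts without detail; your observation that the crude bound $\sum_j|T_j(\la)|\ge 5$ only suffices when $k-\ell=2$ is exactly the point that makes the refinement necessary.
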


\begin{proof}
Since $|\cH^{2^k}(\la)|=0$ we deduce that $|T_k(\la)|=0$. Hence we have that
$|T_{k-1}(\la)|=|\cH^{2^{k-1}}(\la)|\leq 2$.
We know that $n=\sum_{j=0}^{k}|T_j(\la)|2^j$, hence we obtain that
$\sum_{j=0}^{k}|T_j(\la)|\geq k+3-\ell$. Since $n$ has $k+1-\ell$ binary
digits, we deduce that $\nu_2(\chi^\la(1))\geq 2$ from Theorem~\ref{theo: mac}.
\end{proof}

Finally, we state the following consequence of Nakayama's Conjecture (proved
independently by R.~Brauer and G.~de B.~Robinson, see e.g. \cite{James} or
\cite{OlssonBook}).
The second statement follows from \cite[Thm.~9.2]{Nav98}.

\begin{prop}   \label{prop: Nak}
 Let $p$ be a prime and let $n=a+pw$ for some $a\in\{0,1,\ldots, p-1\}$ and
 $w\in\NN$. Let $\la\vdash n$. Then $\chi^\la$ lies in the principal $p$-block
 of $\fS_n$ if, and only if, $C_p(\la)=(a)$. Moreover every irreducible
 constituent of $(\chi^\la)_{\fA_n}$ lies in the principal $p$-block of $\fA_n$.
\end{prop}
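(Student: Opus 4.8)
The plan is to prove this in two parts: the characterisation of the principal $p$-block of $\fS_n$ via $p$-cores, and then the descent to $\fA_n$. For the symmetric group statement, I would invoke Nakayama's Conjecture, which asserts that $\chi^\la$ and $\chi^\mu$ lie in the same $p$-block of $\fS_n$ if and only if $C_p(\la)=C_p(\mu)$. So it suffices to identify which $p$-core governs the principal block. The trivial character $\chi^{(n)}$ lies in the principal block, so I would compute $C_p((n))$: repeatedly stripping $p$-hooks from the single row $(n)$ reduces $n$ modulo $p$, leaving the row of length $a$, i.e. $C_p((n))=(a)$. Hence $\chi^\la\in B_0(\fS_n)$ iff $C_p(\la)=(a)$, which is the first assertion. (One should note $w\ge 1$ guarantees $p\mid |\fS_n|$ so the principal block is genuinely a $p$-block with nontrivial defect, but the characterisation holds regardless.)

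For the second statement, I would argue via block covering. Since $\fA_n\unlhd\fS_n$ with $p$-power index trivial (index $2$, and $p$ is odd), by \cite[Thm.~9.2]{Nav98} (Brauer correspondence / blocks and normal subgroups) the principal block $B_0(\fS_n)$ covers exactly the principal block $B_0(\fA_n)$, and every irreducible character of $\fA_n$ lying under a character of $B_0(\fS_n)$ belongs to $B_0(\fA_n)$. Now if $\chi^\la\in B_0(\fS_n)$, then every irreducible constituent of $(\chi^\la)_{\fA_n}$ lies under $\chi^\la$, hence lies in $B_0(\fA_n)$. This gives the ``moreover'' clause directly, using the restriction behaviour recalled earlier in the excerpt (namely $(\chi^\la)_{\fA_n}$ is either irreducible or a sum $\phi+\phi^g$).

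I do not expect a serious obstacle here; the only points requiring care are: (i) correctly citing the form of Nakayama's Conjecture as stated in \cite{James} or \cite{OlssonBook}, and making explicit that the principal block corresponds to the $p$-core $(a)$ rather than stating it as folklore; and (ii) ensuring the application of \cite[Thm.~9.2]{Nav98} is valid — this is where the hypothesis that $p$ is odd (so that $|\fS_n:\fA_n|=2$ is a $p'$-number) enters, guaranteeing that $B_0(\fS_n)$ covers a unique block of $\fA_n$, necessarily the principal one since it contains the trivial character. The computation $C_p((n))=(a)$ is the only genuinely combinatorial step, and it is immediate from the hook-removal description of the $p$-core.
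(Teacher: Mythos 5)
Your proposal is correct and follows essentially the same route as the paper, which likewise derives the first assertion from Nakayama's Conjecture (identifying the principal block by the $p$-core $C_p((n))=(a)$ of the trivial character) and the second from \cite[Thm.~9.2]{Nav98} on the principal block covering only the principal block of a normal subgroup. The only cosmetic remark is that the uniqueness of the covered block in \cite[Thm.~9.2]{Nav98} holds for the principal block of any normal subgroup regardless of the index, so the parity of $|\fS_n:\fA_n|$ is not actually needed there.
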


In order to prove Theorem C, we devote most of our efforts to show that the
following slightly stronger statement holds.

\begin{thm}   \label{teo: Sym q}
 Let $q$ and $p$ be primes and let $n\geq 5$ be a natural number such that
 $q<p\leq n$. Then there exists an irreducible character $\chi$ in the
 principal $p$-block of $\fS_n$ such that $q$ divides $\chi(1)$. Moreover,
 if $\phi$ is an irreducible constituent of the restriction $\chi_{\fA_n}$,
 then $q$ divides $\phi(1)$.
\end{thm}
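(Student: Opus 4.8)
The plan is to realise, for every such triple $(q,p,n)$, the required character in the form $\chi=\chi^\la$ for a carefully chosen partition $\la\vdash n$. By Proposition~\ref{prop: Nak} one knows precisely when $\chi^\la$ lies in the principal $p$-block of $\fS_n$, namely when $C_p(\la)=(n\bmod p)$, and in that case every irreducible constituent of $(\chi^\la)_{\fA_n}$ lies in the principal $p$-block of $\fA_n$; recall also that $(\chi^\la)_{\fA_n}$ is irreducible exactly when $\la\neq\la'$. So it suffices to find $\la\vdash n$ with $C_p(\la)=(n\bmod p)$, with $q\mid\chi^\la(1)$, and with $\la\neq\la'$: then $\phi:=(\chi^\la)_{\fA_n}$ is irreducible of degree $\chi^\la(1)$ and settles both assertions at once. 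The only residual difficulty will be cases, all with $q=2$, in which one is forced to take $\la=\la'$; there $(\chi^\la)_{\fA_n}=\phi+\phi^g$ with $\phi(1)=\chi^\la(1)/2$, so one instead needs $\nu_2(\chi^\la(1))\geq2$, which is exactly what Lemma~\ref{lem: k+1-alpha} is designed to supply.

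The basic tool is the family of hooks $\la=(n-m,1^m)$, for which $\chi^\la(1)=\binom{n-1}{m}$. Since a rim $p$-hook removed from a hook leaves a hook, the $p$-core $C_p((n-m,1^m))$ is again a hook, and a direct abacus computation shows it equals the single row $(n\bmod p)$ exactly when $p\mid n$ (in which case \emph{every} hook of $n$ lies in the principal $p$-block) or when $p\mid m$. On the other side, $\nu_q\big(\binom{n-1}{m}\big)$ equals --- by Kummer's theorem, equivalently by Theorem~\ref{theo: mac} applied to the $q$-core tower of the hook, or via the last-layer criterion in Lemma~\ref{lem: last layer} --- the number of carries in the base-$q$ addition of $m$ and $n-1-m$. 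So the task reduces to: choose $m$ with $0<m<n$, a multiple of $p$ when $p\nmid n$, for which this addition carries, with $(n-m,1^m)$ not self-conjugate (i.e.\ $m\neq(n-1)/2$) unless $q=2$ and $\nu_2\big(\binom{n-1}{m}\big)\geq2$. When $p\mid n$ this is short: since $p\neq q$, the integer $n$ is not a power of $q$, hence some base-$q$ digit of $n-1$ is less than $q-1$, and an explicit $m$ (or its mirror $n-1-m$, which gives the same binomial) produces a carry while keeping $m\neq(n-1)/2$ in all but a few $q=2$ instances, which are absorbed by Lemma~\ref{lem: k+1-alpha}.

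The substantial case is $p\nmid n$, where $m$ must be taken from $\{p,2p,3p,\dots\}\cap(0,n)$. Inspecting the base-$q$ digits of $n-1$, one shows that some such $m$ forces a carry unless $n$ is exceptional --- the obstructions being, roughly, $n$ a power of $q$, $n$ only slightly larger than $p$, and, when $q=2$, $n$ of the shape $2^{k+1}-2^\ell$. For these remaining $n$ one replaces hooks by two-row partitions $(n-pk,pk)$ and near-hook partitions such as $(b,b,1^{p-b})$, each of which one checks to lie in the principal $p$-block via Proposition~\ref{prop: Nak}; their degrees are differences or quotients of binomial coefficients (for instance $\chi^{(n-pk,pk)}(1)=\binom{n}{pk}-\binom{n}{pk-1}$, divisible by $q$ whenever both these binomials are, which happens when $n$ is a power of $q$), and turn out to be divisible by $q$ throughout the exceptional set; when self-conjugacy cannot be avoided and $q=2$ one again invokes Lemma~\ref{lem: k+1-alpha}, which for $n=2^{k+1}-2^\ell$ certifies $\nu_2(\chi^\la(1))\geq2$ for any $\la\vdash n$ with no $2^k$-hook and at most two $2^{k-1}$-hooks. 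The heart of the argument, and the main obstacle, is precisely this bookkeeping: isolating, through base-$q$ and abacus arithmetic, the $n$ that escape the hook construction, and verifying that the auxiliary families together with Lemma~\ref{lem: k+1-alpha} cover exactly those $n$ --- the numbers $2^{k+1}-2^\ell$ being the genuinely delicate ones.
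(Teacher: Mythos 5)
Your setup is sound and in fact coincides with the paper's toolkit: Proposition~\ref{prop: Nak} for block membership, MacDonald's theorem (equivalently Kummer's) for the $q$-divisibility of hook degrees, the reduction of the $\fA_n$ statement to choosing $\la\neq\la'$, and Lemma~\ref{lem: k+1-alpha} for the unavoidable self-conjugate $q=2$ cases. Your criterion for a hook $(n-m,1^m)$ to lie in the principal $p$-block (namely $p\mid n$ or $p\mid m$) is correct, as are the degree formulas you quote. But the proof is not actually there: the whole case $p\nmid n$ --- which is the substance of the theorem and occupies the paper's Lemmas~\ref{lem: pw>} and~\ref{lem: pw<} together with their corollaries --- is deferred as ``bookkeeping'', with the exceptional set described only ``roughly'' and the claim that the auxiliary families cover it asserted rather than verified. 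You say so yourself: ``the heart of the argument\dots is precisely this bookkeeping.'' That is the part a proof must contain.

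Moreover, the auxiliary families you do name demonstrably fail in concrete cases. Take $(n,p,q)=(8,7,2)$: the only hooks of $8$ in the principal $7$-block of $\fS_8$ are $(8)$ and $(1^8)$, both of degree $1$; your two-row candidate $(n-pk,pk)=(1,7)$ is not a partition; and your near-hook $(b,b,1^{p-b})$ forces $b=1$ and degenerates to $(1^8)$, again of degree $1$. The same happens for $(n,p,q)=(12,11,2)$, where in addition $n=2^4-2^2$ violates the hypothesis $\ell\le k-2$ of Lemma~\ref{lem: k+1-alpha}, which you invoke for all $n$ of the shape $2^{k+1}-2^\ell$ without checking that hypothesis. (The paper handles both of these instances with the partition $(n-2,2)$ produced in Lemma~\ref{lem: pw<}, and more generally uses three-part partitions such as $(\alpha_kq^k-1,a+1,1^{\,n-\alpha_kq^k-a})$ that are absent from your list.) So completing your argument requires at least one further family of partitions and a genuinely explicit delimitation of the exceptional $n$; as written the proposal is a plausible strategy outline with a real gap, not a proof with omitted routine detail.
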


We split the proof of Theorem~\ref{teo: Sym q} into a series of lemmas (dealing
with symmetric groups) and corresponding corollaries (concerning alternating
groups). We fix below the notation that will be kept for the rest of this
section.

\begin{nota}
Let $2\leq q<p$ be prime numbers and let $n\in\NN$, which we write as $n=a+pw$
for uniquely determined $a\in\{0,1,\ldots, p-1\}$ and $w\in\NN$. Moreover let
$$n=\alpha_kq^k+\sum_{j=0}^{k-1}\alpha_jq^j,$$
be the $q$-adic expansion of $n$, where $\alpha_k\neq 0$.
We need also to fix the notation for the $q$-adic expansions of $pw$ and $a$.
We let
$$a=\beta_kq^k+\sum_{j=0}^{k-1}\beta_jq^j\ \ \ \text{and} \ \ \ 
  pw=\zeta_kq^k+\sum_{j=0}^{k-1}\zeta_jq^j,$$
where $\beta_j, \zeta_j\in\{0,1,\ldots, q-1\}$ for all $j\in\{0,\ldots, k-1\}$
and $0\leq\beta_k\leq\zeta_k\leq \alpha_k\leq q-1$. Note that
$pw\neq \zeta_k q^k$, as both $\zeta_k$ and  $q$ are smaller than $p$.

\smallskip

Finally we denote by $\cB_n(q,p)$ the set consisting of all partitions $\la$
of $n$ such that $\chi^\la$ lies in the principal $p$-block of $\fS_n$ and
such that $q$ divides $\chi^\la(1)$.
\end{nota}

\begin{lem}\label{lem: a=0}
 Suppose that $a=0$. Then $\la=(\alpha_kq^k, 1^{n-\alpha_kq^k})\in\cB_n(q,p)$.
\end{lem}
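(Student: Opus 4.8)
The plan is to verify directly the two requirements packaged in $\la=(\alpha_kq^k,1^{n-\alpha_kq^k})\in\cB_n(q,p)$: namely that $\chi^\la$ lies in the principal $p$-block of $\fS_n$, and that $q$ divides $\chi^\la(1)$. Throughout I write $m=\alpha_kq^k$, so that $n-m=\sum_{j=0}^{k-1}\alpha_jq^j$ and, since $a=0$, $p\mid n$. Both requirements will come down to elementary hook-length bookkeeping for the hook shape $\la$.

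For block membership I would use Proposition~\ref{prop: Nak}: as $a=0$, the partition $(a)$ is empty, so it suffices to show $C_p(\la)=\varnothing$. The first-column hook lengths of $\la$ are $n,\,n-m,\,n-m-1,\,\dots,\,2,\,1$, i.e.\ the $\beta$-set of $\la$ (with $n-m+1$ beads) is $\{1,2,\dots,n-m\}\cup\{n\}$. This differs from the $\beta$-set $\{0,1,\dots,n-m\}$ of the empty partition (with the same number of beads) only in that a bead in position $0$ has been moved to position $n$; since $p\mid n$ these positions lie on the same runner of the $p$-abacus, so the two $\beta$-sets have equal bead-counts on every runner and therefore define the same $p$-core. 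Hence $C_p(\la)=\varnothing$ and $\chi^\la\in\Irr(B_0(\fS_n))$. (In passing this shows that whenever $p\mid n$ every hook partition of $n$ lies in the principal $p$-block, a convenient consistency check.)

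For the degree I would argue by contraposition from Lemma~\ref{lem: last layer}: it is enough to show $|\cH^{q^k}(\la)|\neq\alpha_k$. The multiset of hook lengths of $\la=(m,1^{n-m})$ consists of $n$ together with $\{1,2,\dots,m-1\}$ (hooks in row $1$, columns $\geq 2$) and $\{1,2,\dots,n-m\}$ (hooks in column $1$, rows $\geq 2$). Counting multiples of $q^k$: the set $\{1,\dots,m-1\}=\{1,\dots,\alpha_kq^k-1\}$ contains exactly $\alpha_k-1$ of them, namely $q^k,2q^k,\dots,(\alpha_k-1)q^k$; the set $\{1,\dots,n-m\}$ contains none, since $n-m=\sum_{j<k}\alpha_jq^j\leq q^k-1$; and $n$ is not a multiple of $q^k$. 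For this last point one uses that $n\neq\alpha_kq^k$ — true because $p\mid n$ whereas $p\nmid\alpha_kq^k$ (note $0<\alpha_k\leq q-1$ and $q<p$) — so $0<n-m<q^k$ and $n\equiv n-m\not\equiv 0\pmod{q^k}$. Thus $|\cH^{q^k}(\la)|=\alpha_k-1\neq\alpha_k$, and Lemma~\ref{lem: last layer} yields $q\mid\chi^\la(1)$. Combined with the previous paragraph this gives $\la\in\cB_n(q,p)$.

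I do not expect a genuine obstacle here. The one thing needing attention is the degenerate shape $\la=(n)$, which would have $\chi^\la(1)=1$ and break the second part; but this case is excluded precisely by the hypothesis $a=0$, which forces $m=\alpha_kq^k<n$, and I have used this above. Everything else is routine once the $\beta$-set description of $C_p(\la)$ is written down.
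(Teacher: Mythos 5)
Your proof is correct and follows essentially the same route as the paper: block membership via $C_p(\la)=\varnothing$ and Proposition~\ref{prop: Nak}, and the degree via counting that $|\cH^{q^k}(\la)|=\alpha_k-1\neq\alpha_k$ and invoking Lemma~\ref{lem: last layer}. Your $\beta$-set computation and the justification that $q^k\nmid n$ merely fill in details the paper leaves as ``easy to see''.
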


\begin{proof}
It is easy to see that $C_p(\la)$ is the empty partition. Therefore $\chi^\la$
lies in the principal block of $\fS_n$, by Proposition~\ref{prop: Nak}.
The hook-length $h_{1,1}(\la)=wp$ is not divisible by $q^k$. Moreover, for
all $j\in\{2,3,\ldots, \alpha_kq^k\}$ we have
$h_{1,j}(\la)+(j-2)=h_{1,2}(\la)=\alpha_kq^k-1$. Hence we deduce that there
are exactly $\alpha_k-1$ boxes $(1, d)\in [\la]$ such that $q^k$ divides
$h_{1,d}(\la)$. It follows that $|\cH_1^{q^k}(\la)|=\alpha_k-1$. Since
$h_{2,1}(\la)=n-\alpha_kq^k<q^k$ we deduce that $|\cH_j^{q^k}(\la)|=0$ for
all $j\geq 2$. We conclude that $|\cH^{q^k}(\la)|=\alpha_k-1$ and therefore
that $q$ divides $\chi^\la(1)$, by Lemma~\ref{lem: last layer}.
\end{proof}

We have preferred to show in our proof of Lemma \ref{lem: a=0} the strategy
that will be used to prove Lemmas~\ref{lem: pw>} and~\ref{lem: pw<} below,
instead of using the well-known fact that whenever $\mu=(n-d, 1^d)$ is a hook
partition the degree $\chi^\mu(1)$ is equal to the binomial coefficient
$\binom{n-1}{d}$. In any case, the necessary analysis of the divisibility
of the binomial coefficient would not have sensibly shortened our argument.

\begin{cor}   \label{cor: a1}
 Let $a$ and $\la$ be as in Lemma \ref{lem: a=0} and let $\phi$ be an
 irreducible constituent of $(\chi^\la)_{\fA_n}$. Then $\phi$ lies in the
 principal $p$-block of $\fA_n$ and $q$ divides $\phi(1)$.
\end{cor}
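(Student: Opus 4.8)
The plan is to read the corollary off from Lemma~\ref{lem: a=0}, Proposition~\ref{prop: Nak} and the description of $(\chi^\la)_{\fA_n}$ recalled above. First, Lemma~\ref{lem: a=0} tells us that $\la\in\cB_n(q,p)$; in particular $\chi^\la$ lies in the principal $p$-block of $\fS_n$, so Proposition~\ref{prop: Nak} immediately gives that every irreducible constituent of $(\chi^\la)_{\fA_n}$---hence the given $\phi$---lies in the principal $p$-block of $\fA_n$. It then remains only to show $q\mid\phi(1)$, and here I would split according to whether $\la$ is self-conjugate.

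If $\la\neq\la'$, then $(\chi^\la)_{\fA_n}=\phi$ is irreducible, so $\phi(1)=\chi^\la(1)$ and we are done since $q\mid\chi^\la(1)$ by Lemma~\ref{lem: a=0}. If $\la=\la'$, then $\phi(1)=\chi^\la(1)/2$; when $q$ is odd this quotient is still divisible by $q$ because $q\mid\chi^\la(1)$ and $q$ is coprime to $2$, so we are again done.

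The case that actually requires an argument is $q=2$ with $\la=\la'$. Here $\alpha_k=1$ (as $\alpha_k\leq q-1=1$), so $\la=(2^k,1^{n-2^k})$ is a hook with conjugate $(n-2^k+1,1^{2^k-1})$; self-conjugacy forces $n=2^{k+1}-1$ and $\la=(2^k,1^{2^k-1})$, and $n\geq 5$ gives $k\geq 2$. From the hook-length computation already performed in the proof of Lemma~\ref{lem: a=0} (specialised to $\alpha_k=1$) one has $|\cH^{2^k}(\la)|=0$, while a direct inspection---the hook lengths of $\la$ form the multiset $\{n\}\cup\{1,\dots,2^k-1\}\cup\{1,\dots,2^k-1\}$---gives $|\cH^{2^{k-1}}(\la)|=2$. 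Writing $n=2^{k+1}-2^0$ with $\ell=0\leq k-2$, Lemma~\ref{lem: k+1-alpha} yields $\nu_2(\chi^\la(1))\geq 2$, hence $2\mid\chi^\la(1)/2=\phi(1)$, as desired. The main (indeed only) obstacle is this self-conjugate case at $q=2$: passing from $\chi^\la$ to $\phi$ costs one factor of~$2$, so one needs the sharper bound $\nu_2(\chi^\la(1))\geq 2$ furnished by Lemma~\ref{lem: k+1-alpha}; all remaining cases are immediate from Lemma~\ref{lem: a=0}.
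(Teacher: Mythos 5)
Your proof is correct and follows essentially the same route as the paper: block membership via Proposition~\ref{prop: Nak}, the easy cases ($q$ odd, or $q=2$ with $\la\neq\la'$) directly from Lemma~\ref{lem: a=0}, and the self-conjugate hook $\la=(2^k,1^{2^k-1})$ at $q=2$ handled by checking $|\cH^{2^k}(\la)|=0$, $|\cH^{2^{k-1}}(\la)|=2$ and invoking Lemma~\ref{lem: k+1-alpha} to get $\nu_2(\chi^\la(1))\geq 2$. Your write-up is in fact slightly more detailed than the paper's (e.g.\ in verifying $k\geq 2$ and the hook-length multiset explicitly).
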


\begin{proof}
Since $\phi(1)\in\{\chi^\la(1),\chi^\la(1)/2\}$ the statement follows trivially
from Lemma \ref{lem: a=0}, when $q\neq 2$.
Similarly if $q=2$ and $\la\neq\la'$ then, $\phi=(\chi^\la)_{\fA_n}$; and
hence the statement follows, again by Lemma~\ref{lem: a=0}.

Suppose that $\la=(2^k, 1^{n-2^k})=\la'$. Then $n=2^{k+1}-1$
and we observe that $|\cH^{2^k}(\la)|=0$ and that $|\cH^{2^{k-1}}(\la)|=2$.
Using Lemma \ref{lem: k+1-alpha} we deduce that $\nu_2(\chi^\la(1))\geq 2$ and
therefore that $\phi(1)=\chi^\la(1)/2$ is even.
\end{proof}

From now on, we assume that $1\leq a\leq p-1$.

\begin{lem}   \label{lem: pw>}
 Suppose that $wp>\alpha_kq^k$ and let
 $\la=(\alpha_kq^k-1,a+1,1^{n-(\alpha_kq^k+a)})$. Then $\la\in\cB_n(q,p)$ for
 all $n\neq (\alpha_k+1)q^k-1$. For $n= (\alpha_k+1)q^k-1$, we construct
 $\mu\in\cB_n(q,p)$ as follows:
 $$\mu= \begin{cases}
   (a,2,1^{p-2}) & \mathrm{if}\ q=2, w=1\ \ \text{and}\ \ a=p-3,\\
   (\alpha_kq^k-2, a+1, 1^{n-(\alpha_kq^k+a-1)}) & \mathrm{otherwise}.
 \end{cases}$$
\end{lem}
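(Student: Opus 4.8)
The plan is to verify directly, for the partition $\la$ and for each displayed $\mu$, the two conditions defining membership in $\cB_n(q,p)$: that the $p$-core is $(a)$, so that the corresponding character lies in the principal $p$-block of $\fS_n$ by Proposition~\ref{prop: Nak}; and that $q$ divides its degree, which will follow from Lemma~\ref{lem: last layer} as soon as we know the number of $q^k$-hooks is not $\alpha_k$. This is exactly the strategy used for Lemma~\ref{lem: a=0}. First I would record what the hypothesis $wp>\alpha_kq^k$ buys us: writing $m:=wp-\alpha_kq^k$ for the multiplicity of the part $1$ in $\la=(\alpha_kq^k-1,a+1,1^m)$, one has $m=\bigl(\sum_{j=0}^{k-1}\alpha_jq^j\bigr)-a$, whence $1\leq m$ and $a<\sum_{j=0}^{k-1}\alpha_jq^j<q^k$; in particular $a\leq q^k-2$ and $m<q^k$, and $\la$ is automatically a genuine partition since $\alpha_kq^k-1\geq q^k-1\geq a+1$.

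For the degree I would compute the hook lengths of $\la$ row by row. In the first row one gets $h_{1,1}(\la)=wp$, which is \emph{not} divisible by $q^k$ because $wp\neq\zeta_kq^k$ (recorded in the Notation), while the other first-row hook lengths are precisely $\{1,2,\dots,\alpha_kq^k-1\}\smallsetminus\{\alpha_kq^k-a-1\}$; as $2\leq a+1\leq q^k-1$, the removed value is not a multiple of $q^k$, so the first row contributes exactly $\alpha_k-1$ boxes to $\cH^{q^k}(\la)$. In the rows below, $h_{2,1}(\la)=1+\sum_{j<k}\alpha_jq^j$, which satisfies $0<h_{2,1}(\la)<q^k$ precisely because $n\neq(\alpha_k+1)q^k-1$, and the remaining hook lengths there run over $\{1,\dots,a\}$ (rest of row~$2$) and $\{1,\dots,m\}$ (rest of column~$1$), all $<q^k$; so these rows contribute nothing. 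Hence $|\cH^{q^k}(\la)|=\alpha_k-1\neq\alpha_k$ and $q\mid\chi^\la(1)$ by Lemma~\ref{lem: last layer}. For the block condition, the first-column hook lengths $\{wp,\ a+m+1\}\cup\{1,\dots,m\}$ of $\la$, viewed as a beta-set with $m+2$ beads, have the same residues modulo $p$ as those of $(a)$ (padded to $m+2$ parts) --- the only change is $wp$, which is $\equiv0\pmod p$, in place of $0$ --- so $C_p(\la)=(a)$ and $\la\in\cB_n(q,p)$.

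When $n=(\alpha_k+1)q^k-1$ the partition $\la$ is still legitimate, but now $h_{2,1}(\la)=q^k$, so the box $(2,1)$ also lies in $\cH^{q^k}(\la)$, giving $|\cH^{q^k}(\la)|=\alpha_k$; Lemma~\ref{lem: last layer} is inconclusive and, indeed, $\chi^\la(1)$ can then be prime to $q$. So here I would analyse $\mu=(\alpha_kq^k-2,a+1,1^{q^k-a})$ instead (noting $q^k-a=n-(\alpha_kq^k+a-1)$): the same computations give $h_{1,1}(\mu)=wp$ prime to $q^k$, a first-row contribution of at most $\alpha_k-1$ boxes to $\cH^{q^k}(\mu)$, then $h_{2,1}(\mu)=q^k+1\not\equiv0\pmod{q^k}$ with all remaining hook lengths there $<q^k$, and the first-column hook lengths still matching those of $(a)$ mod $p$; so $\mu\in\cB_n(q,p)$. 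It remains to identify when $\mu$ is not a partition: since $a\leq q^k-2$ one has $\alpha_kq^k-2\geq a+1$ unless $\alpha_k=1$ and $a=q^k-2$, and in that case $wp=n-a=q^k+1$, so $p\mid q^k+1$ together with $q^k-2=a\leq p-1$ forces $w=1$, $p=q^k+1$, hence $q=2$ and $a=p-3$ --- precisely the first branch. There $\mu=(a,2,1^{p-2})$ with $n=2p-3=2^{k+1}-1$; I would check $C_p(\mu)=(p-3)=(a)$ directly (remove the rim $p$-hook formed by column~$1$ below row~$1$ together with the box $(2,2)$), compute that $|\cH^{2^k}(\mu)|=0$, and apply Lemma~\ref{lem: last layer} (using $0\neq1=\alpha_k$) to get $2\mid\chi^\mu(1)$.

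The hook-length bookkeeping is routine; the real work lies in the boundary case $n=(\alpha_k+1)q^k-1$ --- verifying that $\la$ genuinely fails there, that $\mu=(\alpha_kq^k-2,a+1,1^{q^k-a})$ is a bona fide partition in every remaining instance, and that the one exception is exactly $q=2$, $w=1$, $a=p-3$, handled by the ad hoc $\mu=(a,2,1^{p-2})$. Confirming $C_p=(a)$ for each of the three shapes is quick via the abacus description of $p$-cores (only the bead at a multiple of $p$ ever moves), but must be carried out in each case.
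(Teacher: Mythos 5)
Your proposal is correct and follows essentially the same route as the paper: verify $C_p=(a)$ via the rim hook $H_{(1,1)}$ of length $wp$ (the paper states this directly where you use the equivalent beta-set formulation), bound $|\cH^{q^k}|$ by $\alpha_k-1$ through the same row-by-row hook-length count, invoke Lemma~\ref{lem: last layer}, and isolate the exceptional case $q=2$, $w=1$, $a=p-3$ by the same inequality analysis. The only differences are cosmetic: you compute the first-row hook multiset exactly rather than bounding it, and you spell out the ad hoc case $\mu=(a,2,1^{p-2})$ in more detail than the paper does.
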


\begin{proof}
Let $n\neq (\alpha_k+1)q^k-1$. Note that $h_{1,1}(\la)=n-a=wp$. This implies
that $C_p(\la)=(a)$. Moreover, $h_{1,1}(\la)$ is not divisible by $q^k$.
Analysing other hook-lengths in $[\la]$, we see that for all
$j\in\{3,\ldots, \la_1\}$ we have
$1\leq h_{1,j}(\la)<h_{1,2}(\la)=\alpha_kq^k-1$. It follows that
$|\cH_1^{q^k}(\la)|\leq \alpha_k-1$.
On the other hand, $h_{2,1}(\la)=n-(\alpha_kq^k-1)<q^k$ and therefore
$|\cH_j^{q^k}(\la)|=0$ for all $j\geq 2$. We deduce that
$|\cH^{q^k}(\la)|\leq \alpha_k-1$ and hence that $q$ divides $\chi^{\la}(1)$,
by Lemma~\ref{lem: last layer}. Notice that for $n=(\alpha_k+1)q^k-1$ we have
that $h_{2,1}(\la)=q^k$. For this reason we need to construct a different
partition in this case.

Let $\mu=(\alpha_kq^k-2, a+1, 1^{n-(\alpha_kq^k+a-1)})$. Then the composition
$\mu$ defines a partition of $n$ unless $\mu_2>\mu_1$. This happens if, and
only if, $a\geq \alpha_kq^k-2$ and in turn this is equivalent to the following
chain of inequalities:
$$n=a+pw\geq (\alpha_kq^k-2)+(\alpha_kq^k+1)=n+(\alpha_k-1)q^k\geq n.$$
This is consistent if, and only if, we have equalities everywhere. Equivalently
we must have $\alpha_k=1$, $pw=q^k+1$ and $a=q^{k}-2=pw-3$.
Moreover, it is now easy to see that $pw-a=3$ forces $w=1$. Hence we have that
$p=q^k+1$, which implies that $q=2$.
The above discussion shows that $\mu_2>\mu_1$ if, and only if, $q=2, w=1$ and
$a=p-3$. In this case we set $\mu=(a,2,1^{p-2})$.
Observing that $h_{2,1}(\mu)=q^k+1$ and
arguing exactly as before we verify that $\mu \in\cB_n(q,p)$ (in both cases).
\end{proof}

\begin{cor}   \label{cor: a2}
 Let $a, wp, \la$ and $\mu$ be as in Lemma~\ref{lem: pw>}. Let
 $\delta\in\{\la, \mu\}$ and let $\phi$ be an irreducible constituent of
 $(\chi^\delta)_{\fA_n}$. Then $\phi$ lies in the principal $p$-block of
 $\fA_n$ and $q$ divides $\phi(1)$.
\end{cor}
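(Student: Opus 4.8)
The plan is to mirror the proof of Corollary~\ref{cor: a1}. Since $\delta\in\cB_n(q,p)$ by Lemma~\ref{lem: pw>}, the character $\chi^\delta$ lies in the principal $p$-block of $\fS_n$, so by Proposition~\ref{prop: Nak} every constituent $\phi$ of $(\chi^\delta)_{\fA_n}$ lies in the principal $p$-block of $\fA_n$; this settles the block statement. For the divisibility, recall that $\phi(1)\in\{\chi^\delta(1),\chi^\delta(1)/2\}$ and that $q\mid\chi^\delta(1)$. If $q$ is odd this immediately gives $q\mid\phi(1)$, and if $q=2$ and $\delta\neq\delta'$ then $\phi=(\chi^\delta)_{\fA_n}$, so $\phi(1)=\chi^\delta(1)$ is even. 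Thus the only case needing work is $q=2$ with $\delta=\delta'$, where I will show $\nu_2(\chi^\delta(1))\geq 2$, so that $\phi(1)=\chi^\delta(1)/2$ is still even.

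First I would determine the self-conjugate possibilities. For $q=2$ we have $\alpha_k=1$. Computing conjugates of the relevant ``two long rows plus a column of $1$'s'' shapes, one checks that $\mu$ is never self-conjugate, and that $\la=(2^k-1,a+1,1^{n-(2^k+a)})$ is self-conjugate only for $a=1$ and $n=2^{k+1}-2$, in which case $\la=(2^k-1,2,1^{2^k-3})$. So it suffices to treat this single family. Here $n\neq 2^{k+1}-1$, so the argument already carried out in the proof of Lemma~\ref{lem: pw>} gives $|\cH^{2^k}(\la)|\leq\alpha_k-1=0$, hence $|T_k(\la)|=0$; and a short hook-length count gives $|\cH^{2^{k-1}}(\la)|\leq 2$.

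For $k\geq 3$ we have $n=2^{k+1}-2^\ell$ with $\ell=1\leq k-2$, so Lemma~\ref{lem: k+1-alpha} applies and yields $\nu_2(\chi^\la(1))\geq 2$. The only remaining case is $k=2$, i.e.\ $n=6$ and $\la=(3,2,1)$; since all hook lengths of $(3,2,1)$ are odd, $\la$ is a $2$-core, so $|T_0(\la)|=6$ and Theorem~\ref{theo: mac} gives $\nu_2(\chi^\la(1))=(6-2)/(2-1)=4$. In both cases $\phi(1)=\chi^\la(1)/2$ is even, which completes the proof. The main obstacle is precisely this self-conjugate case, and inside it the boundary value $k=2$, where the hypothesis $\ell\leq k-2$ of Lemma~\ref{lem: k+1-alpha} just fails and one must invoke Theorem~\ref{theo: mac} directly; everything else is routine bookkeeping paralleling Corollary~\ref{cor: a1}.
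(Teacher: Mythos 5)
Your proposal is correct and follows the same route as the paper's own proof: reduce to $q=2$, observe that $\mu$ is never self-conjugate and that $\la=\la'$ forces $a=1$ and $n=2^{k+1}-2$, check $|\cH^{2^k}(\la)|=0$ and $|\cH^{2^{k-1}}(\la)|\leq 2$, and conclude $\nu_2(\chi^\la(1))\geq 2$ via Lemma~\ref{lem: k+1-alpha}. The one genuine difference is that you noticed the hypothesis $\ell\leq k-2$ of Lemma~\ref{lem: k+1-alpha} fails for $n=2^{k+1}-2$ when $k=2$ (here $\ell=1>k-2=0$), i.e.\ for $n=6$, $p=5$, $\la=(3,2,1)$, and you patched this by a direct application of Theorem~\ref{theo: mac}: all hooks of $(3,2,1)$ are odd, so it is a $2$-core, $|T_0(\la)|=6$, and $\nu_2(\chi^\la(1))=4$. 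The paper's proof invokes Lemma~\ref{lem: k+1-alpha} without comment in this boundary case, so your version is actually the more careful one; the conclusion is of course unaffected since $\chi^{(3,2,1)}(1)=16$.
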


\begin{proof}
Exactly as in the proof of Corollary \ref{cor: a1}, we can focus on the case
where $q=2$. If $n=2^{k+1}-1$ then $\delta=\mu\neq \mu'$ and therefore
$\chi^\mu(1)=\phi(1)$ is even. If $n\neq 2^{k+1}-1$ then $\la=\la'$
if and only if $a=1$ and $n=2^{k+1}-2$. Analysing the hook-lengths in
$[\la]$ we observe that $|\cH^{2^k}(\la)|=0$ and that
$|\cH^{2^{k-1}}(\la)|\leq 2$.
Using Lemma~\ref{lem: k+1-alpha} we deduce that $\nu_2(\chi^\la(1))\geq 2$
and therefore that $\phi(1)=\chi^\la(1)/2$ is even.
\end{proof}

\begin{lem}   \label{lem: pw<}
 Suppose that $wp<\alpha_kq^k$. If $\beta_k>0$, then define $\la\vdash n$
 as follows:
 $$\la=(a,a-\beta_kq^k+1, 1^{n-2a+\beta_kq^k-1}).$$
 Otherwise let $\la\vdash n$ be defined by:
 $$\la= \begin{cases}
   (n-2,2) & \mathrm{if}\ a=1,\\
   (a,1^{wp}) & \mathrm{if}\ a>1\ \ \text{and}\ \ n\neq \alpha_kq^k,\\
   (a,2,1^{wp-2}) & \mathrm{if}\ a>1\ \ \text{and}\ \ n= \alpha_kq^k.
 \end{cases}$$
 In all cases we have that $\la\in\cB_n(q,p)$.
\end{lem}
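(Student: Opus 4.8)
The plan is to treat each partition $\la$ appearing in the statement by the same three steps: first verify that $\la$ is a genuine partition of $n$; then show $C_p(\la)=(a)$, so that $\chi^\la$ lies in $B_0(\fS_n)$ by Proposition~\ref{prop: Nak}; and finally show $q\mid\chi^\la(1)$, which by the contrapositive of Lemma~\ref{lem: last layer} amounts to checking $|\cH^{q^k}(\la)|<\alpha_k$. Every $\la$ in the list is a near-hook (a hook partition, possibly with one extra box in the second row), so $\cH(\la)$ can be listed explicitly. The first step is routine: one checks that the second part does not exceed the first — this is exactly where the branch hypotheses $\beta_k>0$, resp.\ $a>1$, enter — and that all exponents are non-negative, using $a\leq p-1\leq pw$.

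For the second step I would use one uniform observation: in every case $\la$ has a hook of length exactly $pw$, namely at the node $(1,1)$ when $\la=(n-2,2)$ and at $(2,1)$ otherwise, and removing the corresponding rim hook leaves the one-row partition $(a)$. (For $\la=(a,a-\beta_kq^k+1,1^{n-2a+\beta_kq^k-1})$ this rim hook consists of the first column below row~$1$ together with the second row beyond the first column; a short count gives its length $n-a=pw$. The remaining cases are immediate.) Since removing a rim hook whose length is divisible by $p$ just moves a bead within its own runner on the $p$-abacus, it leaves the $p$-core unchanged; hence $C_p(\la)=C_p((a))=(a)$, the last equality because $a<p$.

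The third step is the crux. In each case I list $\cH(\la)$ and count its multiples of $q^k$. The ``long'' hook lengths — $h_{2,1}(\la)=pw$ (or $h_{1,1}(\la)=pw$), and in addition $h_{1,1}(\la)=pw+\beta_kq^k$ in the fat-hook case — are \emph{not} divisible by $q^k$: this is precisely the assertion $pw\neq\zeta_kq^k$ recorded in the Notation, i.e.\ $pw$ has a nonzero $q$-adic digit below position $k$, and then so does $pw+\beta_kq^k$ modulo $q^k$. The remaining hook lengths fall into the intervals $\{1,\dots,\beta_kq^k-1\}$, which contributes $\beta_k-1$ multiples of $q^k$, $\{\beta_kq^k+1,\dots,a\}$, which contributes none as $a<(\beta_k+1)q^k$, and $\{1,\dots,\ell\}$ where $\ell$ is the length of the first column strictly below the node carrying the long hook — together with the still shorter pieces $\{1\}$, $\{1,2\}$ in the $\beta_k=0$ branches, which are harmless since $q^k\geq3$ (as $n\geq5$). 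In the fat-hook case this yields
$$|\cH^{q^k}(\la)|=(\beta_k-1)+\lfloor\ell/q^k\rfloor\leq(\beta_k-1)+\zeta_k,$$
and since $n=a+pw<q^{k+1}$ there is no carry out of the $q^k$-place when the $q$-adic expansions of $a$ and $pw$ are added, whence $\beta_k+\zeta_k\leq\alpha_k$ and $|\cH^{q^k}(\la)|\leq\alpha_k-1$. In the $\beta_k=0$ branches one has $a<q^k$, and using the relation between $n$ and $\alpha_kq^k$ dictated by the branch (when $a=1$, the inequalities $pw<\alpha_kq^k\leq n=1+pw$ force $n=\alpha_kq^k$) one checks $pw=\alpha_kq^k-\delta$ with $1\leq\delta<q^k$, so $\lfloor pw/q^k\rfloor=\alpha_k-1$ and the count works out to $\alpha_k-1$ or $\alpha_k-2$. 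In all cases $|\cH^{q^k}(\la)|<\alpha_k$, so $q\mid\chi^\la(1)$ and $\la\in\cB_n(q,p)$.

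The main obstacle I anticipate is the hook-length bookkeeping of step three in the fat-hook branch $\beta_k>0$, plus confirming that the degenerate configurations ($\alpha_k=1$, or $a$ close to $q^k$, or small $n$) are either harmless or flatly incompatible with the hypotheses of the relevant branch — for instance $\alpha_k=1$ together with $a=q^k-1$ in the branch $\la=(a,2,1^{pw-2})$ is vacuous, as it would force $pw<p$.
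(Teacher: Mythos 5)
Your proposal is correct and follows essentially the same route as the paper's proof: establish $C_p(\la)=(a)$ via the rim hook of length $pw$ at $(2,1)$ (resp.\ $(1,1)$ for $(n-2,2)$), then bound $|\cH^{q^k}(\la)|$ row by row — in the $\beta_k>0$ case by $(\beta_k-1)+0+\zeta_k\leq\alpha_k-1$ using that no carry occurs at the $q^k$-digit in $a+pw=n$ — and conclude with Lemma~\ref{lem: last layer}. Your write-up is in fact more explicit than the paper's about the degenerate subcases and about why $pw$ and $pw+\beta_kq^k$ are not divisible by $q^k$.
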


\begin{proof}
Let us first analyse the case where $\beta_k>0$. It is clear that $\la$ is
well defined and that we have $C_p(\la)=(a)$ (since $h_{2,1}(\la)=wp$).
Moreover we observe that $|\cH_1^{q^k}(\la)|=\beta_k-1$, $|\cH_2^{q^k}(\la)|=0$
and that $\sum_{j\geq 3}|\cH_j^{q^k}(\la)|\leq\zeta_k$. It follows that
$|\cH^{q^k}(\la)|\leq\alpha_k-1$ and hence that $q$ divides
$\chi^{\la}(1)$, by Lemma~\ref{lem: last layer}.

We assume now that $\beta_k=0$ and let $\la=(a,1^{wp})$. Clearly
$C_p(\la)=(a)$. Moreover, analysing again the hook-lengths in $[\la]$ and
using Lemma \ref{lem: last layer} we deduce that $q$ divides $\chi^\la(1)$
unless $a=1$ or $n=\alpha_kq^k$. In these two cases we choose $\la$ as
described in the statement and we verify that $C_p(\la)=(a)$ and that $q$
divides $\chi^\la(1)$, again using Lemma~\ref{lem: last layer}.
\end{proof}

\begin{cor}   \label{cor: a3}
 Let $a, pw$ and $\la$ be as in Lemma \ref{lem: pw<}. Let $\phi$ be an
 irreducible constituent of $(\chi^\la)_{\fA_n}$. Then $\phi\in B_0(\fA_n)$
 has degree divisible by $q$.
\end{cor}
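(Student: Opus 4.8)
The plan is to follow the template of the proofs of Corollaries~\ref{cor: a1} and~\ref{cor: a2}. The block statement comes for free: by Lemma~\ref{lem: pw<} we have $\la\in\cB_n(q,p)$, so $\chi^\la$ lies in the principal $p$-block of $\fS_n$, and hence by Proposition~\ref{prop: Nak} every irreducible constituent $\phi$ of $(\chi^\la)_{\fA_n}$ lies in $B_0(\fA_n)$. It remains to prove that $q$ divides $\phi(1)$, and for this I would distinguish the cases $q$ odd and $q=2$.

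When $q$ is odd the argument is immediate: since $\phi(1)\in\{\chi^\la(1),\chi^\la(1)/2\}$ and $q\nmid 2$, the divisibility $q\mid\chi^\la(1)$ furnished by Lemma~\ref{lem: pw<} forces $q\mid\phi(1)$. So the real question is the case $q=2$, where the dichotomy is whether $\la$ is self-conjugate: if $\la\neq\la'$ then $\phi=(\chi^\la)_{\fA_n}$, so $\phi(1)=\chi^\la(1)$ is even by Lemma~\ref{lem: pw<} and we are done; if instead $\la=\la'$ we would have $\phi(1)=\chi^\la(1)/2$ and would need a finer lower bound on $\nu_2(\chi^\la(1))$, as in the two previous corollaries.

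The key point I would establish is that for $q=2$ \emph{none} of the partitions $\la$ exhibited in Lemma~\ref{lem: pw<} is self-conjugate. First note that for $q=2$ the leading binary digit is $\alpha_k=1$, so $\alpha_kq^k=2^k$ and the standing hypothesis $wp<\alpha_kq^k$ becomes $wp<2^k$; since $w\geq1$ (because $a\leq p-1<p\leq n$) this gives $p\leq wp<2^k$, whence $a<2^k$ and therefore $\beta_k=0$. Thus $\la$ is one of $(n-2,2)$, $(a,1^{wp})$ or $(a,2,1^{wp-2})$, and it suffices to compare $\la_1$ with $\la'_1$, the number of parts of $\la$. For $(n-2,2)$ that number is $2$, strictly smaller than $n-2$ since $n\geq5$; for $(a,1^{wp})$ it is $wp+1$; and for $(a,2,1^{wp-2})$ it is $wp$. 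As $a\leq p-1<p\leq wp<wp+1$, in every case $\la_1\neq\la'_1$, so $\la\neq\la'$.

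With this in hand the corollary follows: $(\chi^\la)_{\fA_n}=\phi$ is irreducible and $\phi(1)=\chi^\la(1)$ is divisible by $q$ by Lemma~\ref{lem: pw<}. The main (and essentially only) obstacle is precisely this self-conjugacy check; in contrast to Corollaries~\ref{cor: a1} and~\ref{cor: a2}, the self-conjugate case simply never arises here, so Lemma~\ref{lem: k+1-alpha} is not needed, and one only has to be careful with the elementary inequalities pinning down $\beta_k=0$ and excluding $\la_1=\la'_1$.
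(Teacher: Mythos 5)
Your proof is correct and follows essentially the same route as the paper's: reduce to $q=2$, deduce $\beta_k=0$ from $a<p\leq wp<\alpha_kq^k$, and then rule out $\la=\la'$ for each of the three candidate partitions (the paper leaves this last check as ``easy to see''; your comparison of $\la_1$ with the number of parts supplies exactly the missing detail). Nothing further is needed.
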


\begin{proof}
As usual, if $q\neq 2$ then the statement follows easily from
Lemma~\ref{lem: pw<}. When $q=2$ the parameter $\beta_k$ is the coefficient
of $2^k$ in the binary expansion of $a$. Since $a<p\leq wp<n$ we deduce that
$\beta_k<\alpha_k\leq 1$, hence $\beta_k=0$. Moreover, as usual we only need
to analyse the situations where $\la=\la'$. It is easy to check that this
never occurs. Hence $\chi^\la(1)=\phi(1)$ is even.
\end{proof}

\begin{proof}[Proof of Theorem C]
The statement of Theorem~\ref{teo: Sym q} concerning $\fS_n$ follows from
Lemmas~\ref{lem: a=0}, \ref{lem: pw>} and~\ref{lem: pw<}. On the other hand,
the part of Theorem~\ref{teo: Sym q} concerning $\fA_n$ follows from
Corollaries~\ref{cor: a1}, \ref{cor: a2} and~\ref{cor: a3}.
As already remarked at the beginning of the section, Theorem~\ref{teo: Sym q}
implies Theorem C.
\end{proof}

In order to prove Theorem C without the assumption $q<p$, we would need a
radically different combinatorial approach. This is not directly relevant for
the purpose of the present article. Nevertheless, it could be material for
further investigation.

\section{Simple groups of Lie type and sporadic groups}

The aim of this section is the proof of Theorem~B on the principal blocks for
the remaining non-abelian simple groups. The case of sporadic groups is
immediately checked from the known character tables, leading to the unique
counter-example $M_{22}$ at the prime~7, so by the classification of the
finite simple groups we are left to deal with the finite simple group of Lie
type.

\begin{thm}   \label{thm:lietype}
 Let $S$ be a finite simple group of Lie type and $\ell$ an odd prime dividing
 $|S|$. Then the principal $\ell$-block of $S$ contains an irreducible
 character of even degree.
\end{thm}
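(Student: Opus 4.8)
The plan is to reduce, as far as possible, to statements about unipotent characters and known degree formulas, and then to split into the defining-characteristic and non-defining-characteristic cases. First I would treat the defining characteristic case, i.e. $\ell=p$ where $S$ is defined over a field of characteristic $p$: here the principal $\ell$-block is the unique block of positive defect and it contains the Steinberg character, but more usefully it contains \emph{all} unipotent characters. For every finite simple group of Lie type there is a unipotent character of even degree (for instance, for the generic case one can exhibit a character whose degree is a polynomial in $q$ with an obvious factor of $2$, such as the reflection character of degree $q\frac{q^n-1}{q-1}$ in type $A_{n-1}$, or one invokes the fact that $|S|_2>1$ together with the known factorisations of unipotent degrees); one just needs to check this does not fail for the small-rank exceptions. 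This should be the easy half.

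The main work is the non-defining case, $\ell\neq p$. Here I would use the description of the principal $\ell$-block via $d$-Harish-Chandra theory: if $d=d_\ell(q)$ is the multiplicative order of $q$ modulo $\ell$, then by results of Cabanes–Engueh_ard / Broué–Malle–Michel the unipotent characters in the principal $\ell$-block are exactly those lying in the principal $d$-Harish-Chandra series, i.e. those appearing in $R_{\mathbf{L}}^{\mathbf{G}}$ of the trivial character of a $d$-split Levi $\mathbf{L}$ with $\mathbf{L}$ a $d$-torus times a smaller group; concretely the unipotent characters of $B_0$ are those labelled by the irreducible characters of the relative Weyl group $W_{\mathbf{G}}(d)$. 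The plan is then: among the unipotent characters in $B_0$, find one of even degree. The generic degree of a unipotent character is a product of cyclotomic polynomials over (an integer times) the fake degree denominator, and one controls its $2$-part by counting factors $\Phi_1,\Phi_2$ (which contribute even values when $q$ is odd) or directly using the $2$-adic valuation of $|S|$. The cleanest approach: show $\sum_{\chi\in\Irr(B_0)\cap\mathrm{Uch}(S)}\chi(1)$ or some such has even $2$-part forcing at least one even summand; or more robustly, pick the unipotent character corresponding to the reflection (standard) representation of $W_{\mathbf{G}}(d)$, whose degree has a transparent factorisation, and verify it is even except in a short explicit list of small cases.

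The cases that need separate hand-checking are: (i) groups where the principal $\ell$-block might be a block of \emph{central} or very small defect, or where $W_{\mathbf{G}}(d)$ is tiny, so that the unipotent part of $B_0$ consists only of $\mathbf{1}$ and Steinberg-like characters of possibly odd degree — this happens for instance when $\ell$ is a large primitive prime divisor; in that situation I would instead use a \emph{non-unipotent} character of $B_0$, produced by tensoring with a linear character or by Lusztig induction from a torus, and fall back on the explicit character tables for the finitely many genuinely small groups ($\SL_2$, $\SL_3$, $\SU_3$, $\mathrm{Sp}_4$, $^2B_2$, $^2G_2$, $G_2$, $^3D_4$, $^2F_4$, and small-rank classical groups over $\mathbb{F}_2,\mathbb{F}_3$); (ii) the exceptional Schur-multiplier / exceptional-isogeny situations. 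So the structure is: a generic argument via $d$-Harish-Chandra theory and cyclotomic-polynomial bookkeeping, plus a finite check. The main obstacle I anticipate is making the generic argument uniform across all types and all $d$ simultaneously — in particular handling primitive prime divisors $\ell$ for which the $d$-split Levi is a maximal torus and the relative Weyl group is cyclic, where one must argue that the principal block still contains an even-degree (necessarily non-unipotent, e.g. semisimple or Lusztig-induced) character, and ruling this out uniformly rather than group-by-group.
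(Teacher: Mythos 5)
Your defining-characteristic paragraph is fine (the paper argues the same way, using that every non-Steinberg character lies in the principal block), but your main route for $\ell\neq p$ has a genuine gap exactly where you anticipate it, and the fallback you gesture at is not a patch for a few stray cases --- it is the whole proof. Restricting attention to \emph{unipotent} characters of the principal $d$-Harish-Chandra series fails for infinitely many groups, not just small-rank ones: already for $\SL_2(q)$ with $\ell\mid q+1$ the unipotent characters in $B_0$ are $1$ and $\mathrm{St}$, both of odd degree, and the same phenomenon occurs whenever $\ell$ is a primitive prime divisor making the $d$-split Levi a torus with small relative Weyl group. So no amount of cyclotomic bookkeeping on unipotent degrees can close the argument, and your proposed remedies (tensoring with a linear character --- unavailable, since these groups are perfect; ``Lusztig induction from a torus'' --- which produces virtual, generally reducible, characters) are not worked out and do not obviously yield an irreducible even-degree member of $B_0$.

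The paper sidesteps unipotent characters entirely. By Cabanes--Enguehard the principal $\ell$-block of $G=\bG^F$ meets every Lusztig series $\cE(G,t)$ with $t$ an $\ell$-element of $G^*$, and in particular contains the semisimple character $\chi_t$ of degree $|G^*:C_{G^*}(t)|_{p'}$. This reduces the theorem to two concrete tasks you do not address: (i) choosing $t$ in $[G^*,G^*]$ so that $\chi_t$ descends to the \emph{simple} group $S=G/Z(G)$ (your proposal never handles the passage from the quasi-simple group to $S$); and (ii) finding such a $t$ that does not centralise a Sylow $2$-subgroup of $G^*$, so that the index above is even. Step (ii) is made uniform by the Kondrat'ev--Mazurov theorem that Sylow $2$-subgroups of $S$ are self-centralising except for $\SL_n(q)$, $\SU_n(q)$, $E_6(q)_\SC$ and $\tw2E_6(q)_\SC$, where explicit $\ell$-elements are exhibited. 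Without this (or an equivalent uniform device) your plan leaves the hardest case --- the one you correctly single out as the main obstacle --- unresolved.
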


\begin{proof}
We discuss the various cases. First assume that the defining characteristic $p$
of $S$ coincides with the (odd) prime $\ell$. By a well-known result then
the only irreducible character of $S$ not contained in the principal $p$-block
is the Steinberg character, whose degree is a power of $p$, hence in particular
odd. The claim in this case now follows from the fact that a non-abelian
simple group has order divisible by at least three distinct primes together
with the remaining part of the proof. (Alternatively, a non-abelian simple
group possesses an irreducible character of even degree by a result of Willems.
By what we said before that character will lie in the principal $p$-block.)
\par
We may hence assume that $\ell$ is not the defining characteristic of $S$.
So $S$ is not a Suzuki group nor one of the big Ree groups. We now set up the
following notation. Let $\bG$ be a simple algebraic group of simply connected
type over an algebraic closure of the finite field with $p$ elements, and
$F:\bG\rightarrow\bG$ a Steinberg map, such that $S=G/Z(G)$, where $G:=\bG^F$.
\par
By \cite[Thm.]{CE94} the principal $\ell$-block of $G$ lies in the union of
Lusztig series
$$\cE_\ell(G,1)=\coprod_t \cE(G,t)$$
with $t$ running over the (semisimple) $\ell$-elements of the dual group
$G^*$ modulo conjugation, and in particular, it contains the semisimple
character(s) $\chi_t$ from each series $\cE(G,t)$. Now the degree of $\chi_t$
is given by
$$\chi_t(1)=|G^*:C_{G^*}(t)|_{p'}.$$
In particular this provides a character of even degree in the principal
$\ell$-block of $G$ if $t$ does not centralise a Sylow 2-subgroup of $G^*$.
This is an even degree character in the principal $\ell$-block of the simple
group $S=G/Z(G)$ if it has $Z(G)$ in its kernel, which happens if and only if
$t\in[G^*,G^*]$. Thus, to conclude we need to construct an $\ell$-element
$t\in [G^*,G^*]\cong S$ not centralising a Sylow 2-subgroup of $G^*$.
\par
First assume that $\ell$ does not divide the order of the centraliser of a
Sylow $2$-subgroup of $S$. Then clearly any non-trivial $\ell$-element of $S$
is as required. Now by \cite[Thm.~7]{KM03} a Sylow 2-subgroup $P$ of $S$ is
self-centralising, unless $G$ is one of $\SL_n(q)$, $\SU_n(q)$, $E_6(q)_\SC$
or $\tw2E_6(q)_\SC$. So it remains to consider these four families of groups.
First assume that $G=\SL_n(q)$,
and let $2^{t_1}+\dots+2^{t_r}$, with $t_1<\ldots<t_r$ be the 2-adic
expansion of $n$. Then $PC_S(P)/P$ is a direct product of at most $r-1$ cyclic
groups of order dividing $q-1$, so we may assume that $r\ge2$ and $\ell|(q-1)$.
Now let $t\in\GL_n(q)$ be an $\ell$-element with two distinct eigenvalues, one
with multiplicity $2^{t_r}+1$, the other with $n-2^{t_r}+1$ (respectively one
with multiplicity $2^{t_r}-1$, the other with multiplicity~2 if $r=2$ and
$t_1=0$; respectively with three distinct eigenvalues when $n=3$). Then the
image of $t$ in $G^*=\PGL_n(q)$ does not centralise a Sylow 2-subgroup of $G^*$.
If $G=\SU_n(q)$, then again $PC_S(P)/P$ is a direct product of at most $r-1$
cyclic groups, of order dividing $q+1$ this time, with $r$ as before. Hence
in this case $\ell$ divides $q+1$, and we can construct an element $t$ as
before.
\par
For $S=E_6(q)$ the centraliser of a Sylow 2-subgroup contains a cyclic subgroup
of order dividing $q-1$. Hence again $\ell$ divides $q-1$. Now take $t$ an
$\ell$-element in the centre of a Levi subgroup $L$ of $S$ of type $A_5$. Then
$L$ is the full centraliser of $t$ (as $L$ is a maximal reductive subgroup
of $S$), but its index in $S$ is even, so $t$ is as required. Finally, for
$S=\tw2E_6(q)$, we need to consider divisors $\ell$ of $q+1$, and here any
$\ell$-element in the centre of a Levi subgroup of twisted type $\tw2A_5$
will do.
The proof is complete.
\end{proof}


\end{document}